\theoremstyle{plain}
\newtheorem{theo}{Theorem}[section]
\newtheorem{prop}{Proposition}[section]
\newtheorem{coro}{Corollary}[section]
\newtheorem{lemm}{Lemma}[section]
\newtheorem{fact}{Fact}[section]
\newtheorem{main}{Main Theorem}
\newtheorem{ques}{Question}[section]
\newtheorem{prob}{Problem}[section]
\theoremstyle{definition}
\newtheorem{defi}{Definition}[section]
\newtheorem{exam}{Example}[section]
\newtheorem{rema}{Remark}[section]
\begin{document}

\title[Limit theorems for the total scalar curvature]{Limit theorems for the total scalar curvature}

%%=============================================================%%
%% GivenName	-> \fnm{Joergen W.}
%% Particle	-> \spfx{van der} -> surname prefix
%% FamilyName	-> \sur{Ploeg}
%% Suffix	-> \sfx{IV}
%% \author*[1,2]{\fnm{Joergen W.} \spfx{van der} \sur{Ploeg} 
%%  \sfx{IV}}\email{iauthor@gmail.com}
%%=============================================================%%

\author*[1]{\fnm{Shota} \sur{Hamanaka}\, %\orcidlink{0000-0003-1536-9620}}
\email{hamanaka1311558@gmail.com}}

%\author[2,3]{\fnm{Second} \sur{Author}}\email{iiauthor@gmail.com}
%\equalcont{These authors contributed equally to this work.}

%\author[1,2]{\fnm{Third} \sur{Author}}\email{iiiauthor@gmail.com}
%\equalcont{These authors contributed equally to this work.}

\affil*[1]{\orgdiv{Department of Mathematics}, \orgname{Graduate School of Science, Osaka University}, \orgaddress{\street{1-1 Machikaneyama-cho}, \city{Toyonaka}, \postcode{560-0043}, \state{Osaka}, \country{Japan}}}

%\affil[2]{\orgdiv{Department}, \orgname{Organization}, \orgaddress{\street{Street}, \city{City}, \postcode{10587}, \state{State}, \country{Country}}}

%\affil[3]{\orgdiv{Department}, \orgname{Organization}, \orgaddress{\street{Street}, \city{City}, \postcode{610101}, \state{State}, \country{Country}}}

%%==================================%%
%% Sample for unstructured abstract %%
%%==================================%%

\abstract{ We study some preservation phenomena for lower bound of total scalar curvatures on a smooth manifold.
In particular, we prove that the lower bound of the weighted total scalar curvature (which is known as Perelman's $\mathcal{F}$-functional) on a closed $n$-manifold is preserved 
under the $W^{1, p}~(p > n^{2}/2)$-convergence of Riemannian metrics
and uniformly $C^{0}$-convergence of potential functions,
provided that each scalar curvature is nonnegative.
In the proof, we used a certain stability of the Ricci flow and the heat flow with the Ricci flow background.
We also give some examples that may provide clues to identify the weakest topology for such a preservation phenomenon of the lower bound.}

\keywords{Scalar curvature, Ricci flow, Heat flow, Weak notions of the scalar curvature lower bound}

%%\pacs[JEL Classification]{D8, H51}

\pacs[MSC Classification]{53C21, 53E20}

\maketitle

\section{Introduction}\label{sec1}

Gromov \cite{gromov2014dirac} proved the following ``$C^{0}$-limit theorem''.
\begin{theo}[{\cite[p.1118]{gromov2014dirac}} and \cite{bamler2016proof}]
\label{gromov-limit}
Let $M$ be a (possibly open) smooth manifold and 
$\kappa : M \rightarrow \mathbb{R}$ a continuous function.
Consider a sequence of $C^{2}$-Riemannian metrics $g_{i}$ on $M$
that converges to a $C^{2}$-Riemannian metric $g$ in the local $C^{0}$-sense. Assume that for all $i = 1,2, \cdots$
the scalar curvature $R(g_{i})$ of $g_{i}$ satisfies $R(g_{i}) \ge \kappa$
everywhere on $M.$
Then $R(g) \ge \kappa$ everywhere on $M.$
\end{theo}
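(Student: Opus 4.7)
The plan is to argue by contradiction, using the Ricci-DeTurck flow as a smoothing mechanism, following the strategy later made rigorous by Bamler. Suppose $R(g)(p_{0}) < \kappa(p_{0})$ at some point $p_{0} \in M$. By continuity of $R(g)-\kappa$, I can choose $\delta>0$ and a precompact coordinate ball $U \ni p_{0}$ on which $R(g) < \kappa(p_{0}) - 3\delta$ while $\kappa > \kappa(p_{0}) - \delta$. The hypothesis then gives $R(g_{i}) \geq \kappa > \kappa(p_{0}) - \delta$ on $U$ for every $i$, so the task reduces to showing $R(g)(p_{0}) \geq \kappa(p_{0}) - \delta$.

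The key tool is the Ricci-DeTurck flow with a fixed smooth background metric (for instance a mollification of $g$). Short-time existence for initial data that are $C^{0}$-close to the background (due to Simon, refined by Koch-Lamm and Bamler) produces smooth solutions $g_{i}(t)$ and $g(t)$ on a slightly smaller ball $U' \Subset U$ and a short time interval $[0,T]$, together with a stability statement: $g_{i}(t) \to g(t)$ in $C^{\infty}_{\mathrm{loc}}$ for each fixed $t \in (0,T]$, while $g(t) \to g$ in $C^{2}_{\mathrm{loc}}$ as $t \downarrow 0$ (the latter because $g$ itself is already $C^{2}$).

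Next I would propagate the scalar-curvature lower bound through the flow. Because Ricci-DeTurck flow is Ricci flow up to a diffeomorphism and $\partial_{t}R = \Delta R + 2|\mathrm{Ric}|^{2}$, a localized weak maximum principle with cut-offs supported in $U'$ yields $R(g_{i}(t)) \geq \kappa(p_{0}) - \delta - \eta(t)$ on a smaller ball $U''$, where $\eta(t)\to 0$ as $t \downarrow 0$ uniformly in $i$. Passing to the smooth limit $i \to \infty$ gives the same inequality for $R(g(t))$, and then sending $t \downarrow 0$ together with the pointwise convergence $R(g(t))(p_{0}) \to R(g)(p_{0})$ yields $R(g)(p_{0}) \geq \kappa(p_{0}) - \delta$, contradicting our choice of $U$.

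The hard part will be the parabolic input to step two: establishing short-time existence and $C^{0}$-stability of Ricci-DeTurck flow for merely $C^{0}$-close initial data is delicate and relies on weighted Schauder-type estimates of Koch-Lamm/Bamler. A secondary difficulty is making the localized maximum principle genuinely uniform in $i$, which requires controlling $|\nabla g_{i}(t)|$ and hence $|\mathrm{Ric}(g_{i}(t))|$ on $[0,T]\times U'$ independently of $i$; these follow from Bernstein-type bounds combined with the stability estimates. Once that PDE package is in hand, the geometric contradiction argument above is essentially formal.
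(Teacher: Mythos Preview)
The paper does not supply its own proof of this theorem; it is quoted with attribution, and the surrounding discussion only records that Gromov's original argument proceeds via a cube-gluing construction together with the resolution of Geroch's conjecture, while Bamler's later proof uses the Ricci--DeTurck flow. Your proposal is a faithful outline of Bamler's approach: the contradiction set-up on a small ball, short-time Ricci--DeTurck flow for $C^{0}$-close initial data (Simon, Koch--Lamm), smooth stability for $t>0$ together with $C^{2}$ continuity of $g(t)$ at $t=0$, and a localized maximum-principle propagation of the scalar-curvature lower bound are exactly his ingredients. One small point: Bamler carries out the barrier argument directly for the Ricci--DeTurck flow (so the evolution of $R$ picks up an extra drift term $\nabla_{X}R$, which is harmless for the maximum principle) rather than passing to the Ricci-flow gauge as you phrase it, but this is cosmetic. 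In short, your sketch is correct and coincides with one of the two proofs the paper cites; there is simply no in-paper proof to compare against.
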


In contrast, let $M$ be the same as in the above theorem, for given a continuous function $\sigma : M \rightarrow \mathbb{R},$
the set $\{ g \in \mathcal{M} |~R(g) \le \sigma~\mathrm{on}~M \}$ is $C^{0}$-dense in the set $\mathcal{M}$ of all smooth Riemannian metrics on $M$
({\cite[Theorem B]{lohkamp1995curvature}}).
On the other hand, in our forthcoming paper \cite{hamanaka2023upper}, we will show that in a fixed conformal class, the upper bound of the total scalar curvature is preserved under $C^{0}$-convergence of metric tensors under some assumptions.
Gromov \cite{gromov2014dirac} proved the above theorem by using a gluing technique and the resolution of Geroch's conjecture.
Later, Bamler \cite{bamler2016proof} gave an alternative proof of this theorem using the Ricci--DeTurck flow.
On the other hand, Lee--Topping \cite{lee2022metric} recently proved that nonnegativity of scalar curvature is not preserved in dimension at least four under the uniform convergence of Riemannian distances.
More recently, Kazaras--Xu \cite{kazaras2025codimension} have shown that such a phenomenon can also occur in dimension three.
For other studies on the behavior of the scalar curvature lower bound under various weak topologies, see, for example, \cite{burkhardt2019pointwise,huang2023scalar, jiang2023weak, kazaras2023drawstrings, kazaras2025codimension}.

As we will see below, we can observe that the point-wise version of Gromov's theorem (Theorem \ref{gromov-limit}) is false in general.
That is, we can easily construct an example of $C^{2}$-Riemannian metrics $(g_{i})$ on a smooth manifold $M$ which satisfies the following:
$g_{i}$ converges to a $C^{2}$-Riemannian metric $g$ on $M$ in the $C^{0}$-sense as $i \rightarrow \infty.$
And there is a point $p \in M$ such that for some $\kappa \in \mathbb{R},$
\[
R(g_{i})(p) \ge \kappa~~\mathrm{for~all}~i \in \mathbb{N},
\]
but
\[
R(g)(p) < \kappa.
\]

\noindent
Indeed, Lohkamp gave an example in {\cite[Lecture~Series~2,~Counterexample~2.3.2]{besson1996riemannian}}.
\begin{exam}[{\cite[Lecture~Series~2,~Counterexample~2.3.2]{besson1996riemannian}}. cf. Example \ref{exam-C10} and \ref{exam-closed} in this paper]
\label{exam-below}
For each $i \in \mathbb{N},$ we define a smooth metric on $\mathbb{R}^{n}$ as 
\[
g_{i} :=
\begin{cases}
\exp{ \left(2 f_{i} \right)} \cdot g_{Eucl} & \mathrm{on}~D_{\alpha, i} := \{ x \in \mathbb{R}^{n}~|~ |x|_{g_{Eucl}}^{2} \le \alpha/i \}, \\
g_{Eucl} & \mathrm{in}~\mathbb{R}^{n} \setminus D_{2\alpha, i}.
\end{cases}
\]
Here, $g_{Eucl}$ denotes the Euclidean metric on $\mathbb{R}^{n},$
$|x|_{g_{Eucl}}$ is the Euclidean distance from $x$ to the origin $o,$ the smooth function
\[
f_{i} := \frac{\alpha}{i} - x_{1}^{2} -x_{2}^{2} - \cdots - x_{n}^{2}
\]
whose support is contained in $D_{2\alpha, i},$ and $\alpha \in \mathbb{R}$ is a positive constant.
Then, using the following fact, we have
$R (g_{i}) (o) = C(n) > 0.$
Moreover,
$g_{i}$ converges to $g_{Eucl}$
uniformly in the $C^{0}$-sense on $\mathbb{R}^{n}.$
\begin{fact}[Conformal change of the scalar curvature]
For a Riemannian metric $g$ and a $C^{2}$-function $\phi,$
set $\Bar{g} := e^{2 \phi} g,$ then
\[
R (\Bar{g}) = e^{-2 \phi} R(g) - 2 (n-1)~e^{-2 \phi} \Delta_{g} \phi-  (n-2) (n-1)~e^{-2 \phi}~|d \phi|_{g}^{2}.
\]
\end{fact}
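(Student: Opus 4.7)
This is the standard conformal transformation law for the scalar curvature, so the plan is to follow the usual three-step derivation: connection, Ricci, scalar. First, I would compute the difference tensor $A=\bar{\nabla}-\nabla$ between the two Levi--Civita connections. Applying the Koszul formula to $\bar{g}=e^{2\phi}g$ gives
\[
\bar{\nabla}_X Y \;=\; \nabla_X Y + (X\phi)\,Y + (Y\phi)\,X - g(X,Y)\,\nabla^{g}\phi,
\]
where $\nabla^{g}\phi$ denotes the $g$-gradient of $\phi$. This step is a direct computation with Christoffel symbols in local coordinates and offers no real obstacle.

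Second, I would substitute this formula into the definition $\bar{R}(X,Y)Z=\bar{\nabla}_X\bar{\nabla}_YZ-\bar{\nabla}_Y\bar{\nabla}_XZ-\bar{\nabla}_{[X,Y]}Z$, expand, and trace to obtain the Ricci tensor of $\bar{g}$. After cancellations one finds
\[
\overline{\mathrm{Ric}} \;=\; \mathrm{Ric} - (n-2)\,\mathrm{Hess}_{g}\phi - (\Delta_{g}\phi)\,g + (n-2)\,d\phi\otimes d\phi - (n-2)\,|d\phi|^{2}_{g}\,g.
\]
This is the only genuinely computational step, and its main obstacle is bookkeeping: one must carefully track which terms cancel and which combine, and keep the dimension-dependent coefficients $(n-2)$ and $1$ in the right places.

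Third, I would contract with $\bar{g}^{ij}=e^{-2\phi}g^{ij}$, so every trace picks up an overall factor $e^{-2\phi}$. Using $g^{ij}\mathrm{Ric}_{ij}=R(g)$, $g^{ij}(\mathrm{Hess}_{g}\phi)_{ij}=\Delta_{g}\phi$, $g^{ij}(d\phi\otimes d\phi)_{ij}=|d\phi|^{2}_{g}$, and $g^{ij}g_{ij}=n$, the coefficient of $\Delta_{g}\phi$ becomes $-(n-2)-n=-2(n-1)$ and the coefficient of $|d\phi|^{2}_{g}$ becomes $(n-2)-n(n-2)=-(n-2)(n-1)$, yielding the stated identity.

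As an independent check on signs and coefficients, I would verify consistency with the Yamabe substitution $u=e^{(n-2)\phi/2}$ (for $n\ge 3$), under which the formula is equivalent to the conformal Laplacian identity $-\tfrac{4(n-1)}{n-2}\Delta_{g}u+R(g)\,u=R(\bar{g})\,u^{(n+2)/(n-2)}$. Matching this with the expression above via the chain rule is short and pins down all the coefficients unambiguously.
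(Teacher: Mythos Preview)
Your proposal is correct and is precisely the direct computation the paper has in mind; the paper itself offers no detailed proof, stating only that ``the proof of this fact is a straightforward calculation.'' Your three-step derivation (connection difference via Koszul, Ricci tensor, contraction with $\bar{g}^{-1}=e^{-2\phi}g^{-1}$) with the Yamabe cross-check is the standard route and the coefficients you obtain match.
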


\noindent
The proof of this fact is a straightforward calculation.
Note that the scalar curvature lower bounds are guaranteed only at the origin $o$ in this example. But we can never take a metric whose scalar curvature is bounded from below by some positive constant on a small neighborhood of $o$ and nonnegative on the whole manifold $\mathbb{R}^{n},$ and which is equal to the Euclidean metric outside a compact subset of $\mathbb{R}^{n}.$
Indeed, if such a metric exists, then we can construct a metric on the $n$-dimensional torus whose scalar curvature is nonnegative everywhere and positive somewhere. 
But, this is impossible by the resolution of Geroch's conjecture (see \cite{gromov1980spin, schoen1979existence, schoen1979structure}).
Of course, we can apply Theorem \ref{gromov-limit} to this sequence $(g_{i})$ and $M = \mathbb{R}^{n}.$
But, we can only take the lower bound $\kappa$ such that $\sup_{\mathbb{R}^{n}} \kappa \le 0$ in this case because the support of $f_{i}$ shrinks as $i \rightarrow \infty.$
Hence, we can only obtain the trivial fact $R(g_{Eucl})  = 0 \ge \kappa$ even if we use Theorem \ref{gromov-limit}.
\end{exam}

In this paper, we investigate some total scalar curvature versions of Theorem \ref{gromov-limit}.
More precisely, we will consider the following problems:
\begin{prob}\label{prob-1}
    Let $M$ be a (possibly non-compact) smooth manifold,
$\{ g_{i} \}_{i \in \mathbb{N}}$ a sequence of $C^{2}$-Riemannian metrics
and $g$ a $C^{2}$-Riemannian metric on $M.$
If
$g_{i}$ converges to $g$
in some sense (with respect to $g$)
and
\[
\int_{M} R(g_{i})\, d\mathrm{vol}_{g_{i}} \ge \kappa~~~~\mathrm{for~all}~~i
\]
for some constant $\kappa \in \mathbb{R}.$
Here $R_{g_{i}},\, d\mathrm{vol}_{g_{i}}$ denote respectively the scalar curvature of $g_{i}$ and the Riemannian volume measure of $g_{i}.$
Then, does 
\[
\int_{M} R(g)\, d\mathrm{vol}_{g} \ge \kappa
\]
hold?
\end{prob}

Or, more generally, 
\begin{prob}\label{prob-2}
Let $a, b, c \in \mathbb{R}$ be given constants.
    Let us consider a sequence of $C^{2}$ metrics $g_{i}$ on $M$ that converge to a $C^{2}$ metric in some sense and a sequence of functions $f_{i}$ on $M$ that converges to a function $f$ in some sense.
    Assume that they satisfy
\[
\int_{M} R^{a,b,c}_{f_{i}}(g_{i}) e^{-f_{i}}\, d\mathrm{vol}_{g_{i}}
:= \int_{M} (R_{g_{i}} + a |\nabla f_{i}|_{g_{i}}^{2} + be^{cf}) e^{-f_{i}}\, d\mathrm{vol}_{g_{i}}
\ge \kappa~~~~\mathrm{for~all}~~i
\]
for some constant $\kappa \in \mathbb{R}.$
Then, we ask whether
\begin{equation}\label{eq-prob-2}
\int_{M} R^{a,b,c}_{f}(g) e^{-f}\, d\mathrm{vol}_{g} \ge \kappa
\end{equation}
holds or not.
\end{prob}
\begin{rema}
\begin{itemize}
    \item The left hand side of (\ref{eq-prob-2}) when $(a,b,c) = (1, 0, 0)$, is well-known as Perelman's $\mathcal{F}$-functional \cite{perelman2002entropy}.
    Moreover, if $f$ is $C^{2}$ (more generally, if $\Delta_{g} f$ exists in a certain weak sense), then it can be rewritten as 
    \begin{equation}\label{eq-p-scalar}
    \int_{M} (R_{g} + 2\Delta_{g} f - |\nabla f|_{g}^{2}) e^{-f}\, d\mathrm{vol}_{g}.
    \end{equation}
    It is also known that in a certain sense, the gradient flow for the $\mathcal{F}$-functional under the constraint that $e^{-f}\, d\mathrm{vol}_{g}$ is fixed, is the Ricci flow.
    And, the integrand of (\ref{eq-p-scalar}):
    \[
R_{g} + 2\Delta_{g} f - |\nabla f|_{g}^{2}
\]
    is known as the \textit{P-scalar curvature}.
    Interestingly, it has been found that the scalar curvature and the P-scalar curvature share several analogous properties (cf. \cite{abedin2017p,baldauf2022spinors, chu2024non, law2025positive, wang2025rigidity}).
    \item When $f$ is $C^{2}$ and $(a,b,c) = \left( \frac{m-1}{m}, m(m-1)\mu, \frac{2}{m} \right)$, the weighted total scalar curvature in the right side of (\ref{eq-prob-2}) coincides with the one considered in \cite{case2019weighted}
    via integration by parts.
    It is known that such an analogue of the scalar curvature plays the role of the scalar curvature in various geometric problems (cf. \cite{case2015yamabe, case2019weighted, lott2007remark, perelman2002entropy}).
    \end{itemize}
\end{rema}
\medskip
We emphasize that we will only consider the situation in which metrics converge to some metric with respect to a certain topology which is weaker than $C^{2},$ 
each metric is at least $C^{2},$ and the underlying manifolds we consider are assumed to be smooth.

\medskip
First of all, Problem \ref{prob-1} is true with $C^{0} \cap W^{1,2}$-convergence on a closed manifold $M$ as follows.
\begin{fact}
\label{theo-3}
 Let $M^{n}$ be a closed manifold of dimension $n \ge 3$ and $g$ a $C^{2}$ Riemannian metric on $M.$
Assume that $(g_{i})$ is a sequence of $C^{2}$ Riemannian metrics on $M$ such that 
$g_{i}$ converges to $g$ on $M$ in the $C^{0} \cap W^{1, 2}$-sense as $i \rightarrow \infty$. 
Then, 
\begin{equation}\label{eq-total-conv}
\int_{M} R_{g_{i}}\, d\mathrm{vol}_{g_{i}} \rightarrow \int_{M} R_{g}\, d\mathrm{vol}_{g}
\end{equation}
as $i \rightarrow \infty$.
In particular, Problem \ref{prob-1} holds true with respect to this topology of convergence.
\end{fact}
Here, we said that a sequence of metrics $(g_{i})_{i}$ converges to a metric $g$ on $M$ in the $W^{1, p}$-sense if $g_{i}$ and all first weak derivatives of it respectively converge to those of $g$ with respect to the $L^{p}$-norm of $g.$ (Since $M$ is compact, if $(g_{i})$ converges in the $W^{1, p}$-sense with respect to $g,$ then it also converges $W^{1, p}$-sense with respect to any fixed reference metric on $M.$)
\begin{proof}[Proof of Fact \ref{theo-3}]
    This fact follows from the integration by parts formula (\ref{eq-distr-def}) in Section \ref{section-coro} with $u \equiv 1$ and $h = g$.
\end{proof}
\begin{rema}\label{rema-counterexample}
Note that (\ref{eq-total-conv}) also implies the upper bound of the total scalar curvatures is preserved under the same topology of convergence. 
    On the other hand, \cite{lohkamp1995curvature} (see also \cite{hamanaka2023upper}) suggests that in the above statement, if ``$C^{0} \cap W^{1,2}$'' is replaced by ``$C^{0}$'', (\ref{eq-total-conv}) does not hold in general.
    However, it is not clear whether Problem \ref{prob-1} holds with $C^{0}$-topology or not.
    Nevertheless, Example \ref{exam-closed} in this paper still suggests that it does not hold in general.
    Moreover, in a fixed conformal class, the upper bound of the total scalar curvature is preserved under the $C^{0}$-convergence (see \cite{hamanaka2023upper}).
\end{rema}
\begin{coro}
Let $\mathcal{M}$ be the space of all $C^{2}$-Riemannian metrics on a closed manifold $M.$
For any constant $\kappa \in \mathbb{R},$ the subspace
\[
\left\{ g \in \mathcal{M}~\middle|~\int_{M} R(g)\, d\mathrm{vol}_{g} \ge \kappa \right\}~(\subset \mathcal{M})
\]
is closed in $\mathcal{M}$ with respect to the $C^{0} \cap W^{1, 2}$-topology.
\end{coro}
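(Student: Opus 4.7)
The plan is to show that the set in question is sequentially closed, which suffices since $\mathcal{M}$ equipped with the $W^{1,p}$-topology is metrizable. So I take a sequence $(g_{i})$ of $C^{2}$ Riemannian metrics satisfying both conditions
\[
\int_{M} R(g_{i})\, d\mathrm{vol}_{g_{i}} \ge \kappa \quad \text{and} \quad R(g_{i}) \ge \sigma \text{ on } M,
\]
with $g_{i} \to g$ in the $W^{1,p}$-topology for some $g \in \mathcal{M}$, and I aim to verify that $g$ lies in the same set.

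First I would invoke the Morrey embedding $W^{1,p} \hookrightarrow C^{0,1-n/p} \hookrightarrow C^{0}$, valid because $p > n$, to upgrade the convergence $g_{i} \to g$ in $W^{1,p}$ to convergence in the local (and in fact uniform, since $M$ is closed) $C^{0}$-sense. This gives access to Gromov's $C^{0}$-limit theorem (Theorem \ref{gromov-limit}) with the continuous function $\sigma$ as the lower bound: from $R(g_{i}) \ge \sigma$ for every $i$, I conclude that $R(g) \ge \sigma$ everywhere on $M$.

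Next I would apply Main Theorem \ref{theo-3} directly. Its hypotheses are all met: $p > n$, $M$ is closed of dimension $n \ge 3$, the limit $g$ and each $g_{i}$ are $C^{2}$, the convergence $g_{i} \to g$ takes place in $W^{1,p}$, the scalar curvature integrals $\int_{M} R(g_{i})\, d\mathrm{vol}_{g_{i}}$ are bounded below by $\kappa$, and finally $R(g_{i}) \ge \sigma \ge 0$ furnishes the nonnegativity assumption on each $R(g_{i})$. The conclusion of Main Theorem \ref{theo-3} then yields $\int_{M} R(g)\, d\mathrm{vol}_{g} \ge \kappa$, which combined with $R(g) \ge \sigma$ places $g$ in the set and completes the argument.

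Since both tools (Main Theorem \ref{theo-3} and Theorem \ref{gromov-limit}) have already been established, there is no genuine obstacle here. The only point worth double-checking is simply that $\sigma \ge 0$ is exactly what is needed to invoke the nonnegativity hypothesis of Main Theorem \ref{theo-3}; without this assumption on $\sigma$ the corollary would not follow by this direct route.
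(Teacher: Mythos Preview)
Your proposal is correct and matches the paper's own reasoning: the corollary is stated there explicitly ``as a corollary of Main Theorem \ref{theo-3} and Theorem \ref{gromov-limit},'' without further proof, and your argument spells out exactly that combination --- Morrey embedding to upgrade $W^{1,p}$ to $C^{0}$ convergence, Gromov's theorem for $R(g)\ge\sigma$, and Main Theorem \ref{theo-3} (using $\sigma\ge 0$ to ensure $R(g_{i})\ge 0$) for the integral bound.
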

\medskip
If $M^{2}$ is closed (i.e., compact without boundary) surface, from the Gauss-Bonnet theorem,
\[
\int_{M^{2}} R(g)\, d\mathrm{vol}_{g} = 4 \pi \chi(M)
\]
for each Riemannian metric $g$ on $M.$
Here $\chi(M)$ denotes the Euler characteristic of $M.$
Hence it is sufficient to consider the above problem (unweighted case) in dimension $n \ge 3$
and, unless otherwise mentioned, we will assume below that the dimensions of manifolds are greater than or equal to three.

On the other hand, if $M^{2n}$ is a closed complex $n~(\mathrm{real}~2n)$-manifold ($n \ge 1$) and $g, g_{i}~(i =1,2, \cdots )$ are K{\"a}hler metrics on $M.$
Let $\omega$ and $\omega_{i}$ be the K{\"a}hler forms of $g$ and $g_{i}$ respectively.
Assume that $g_{i}$ converges to $g$ (hence $\omega_{i}$ converges to $\omega$) in the $C^{0}$-sense as $i \rightarrow \infty.$
Then 
{\small \[
\begin{split}
\int_{M} R(g_{i})\, d\mathrm{vol}_{g_{i}} &= \frac{4 \pi}{(n-1)!} (c_{1}(M) \smallsmile [\omega_{i}^{n-1}](M)) \\
&\longrightarrow \frac{4 \pi}{(n-1)!} (c_{1}(M) \smallsmile [\omega^{n-1}](M)) = \int_{M} R(g)\, d\mathrm{vol}_{g}~~~(i \rightarrow \infty),
\end{split}
\]}   
where $c_{1}(M)$ denotes the first Chern class of $M.$
Note that we assumed here that the limiting metric $g$ is K{\"a}hler metric on $M$ as well.
In contrast to this, in our main theorem \ref{theo-1} below, we will not assume that the limiting metric $g$ is a Ricci soliton.
Although it deviates a bit from our subject, the following interesting result about lower bounds of scalar curvature integrals is also known.
\begin{theo}[{\cite[Lecture~Series~1,~Theorem~4.1]{besson1996riemannian}}]
\label{theo-hyp}
Let $M$ be a compact $n$-dimensional manifold $(n \ge 3)$ carrying a hyperbolic metric $g_{0}.$
There is a neighborhood $\mathcal{U}$ of $g_{0}$ in the space of all Riemannian metrics with the $C^{2}$-topology such that for any $g \in \mathcal{U},$
\[
\int_{M} \left( R(g)_{-} \right)^{n/2} d\mathrm{vol}_{g} \ge \int_{M} \left( R(g_{0})_{-} \right)^{n/2}\, d\mathrm{vol}_{g_{0}}
\]
and equality if and only if $g$ is isometric to $g_{0}.$
Here, $R(g)_{-} := \max \left\{ - R(g),~0 \right\}.$
\end{theo}

As our main result in this paper (related to Problem \ref{prob-2}), we will prove the following theorem.

\begin{main}
\label{theo-2}
Let $p > n^{2}/2$ and $a,b,c \in \mathbb{R}$.
Suppose that $M^{n}$ is a closed $n$-manifold ($n \ge 2$), $g$ is a $C^{2}$ Riemannian metric on $M,$ and $(g_{i})$ is a sequence of $C^{2}$ Riemannian metrics on $M$ such that 
$g_{i}$ converges to $g$ on $M$ in the $W^{1, p}$-sense as $i \rightarrow \infty.$
Let $(f_{i})$ be a family of functions on $M$ and $f$ a function on $M.$
Assume the following:
\begin{itemize}
\item[(1)] There is a positive constant $\Lambda > 0$ such that $f$ and $f_{i}~(i \ge 0)$ are $\Lambda$-Lipschitz functions on $M,$
\item[(2)] $f_{i} \overset{C^{0}}{\longrightarrow} f$ uniformly on $M,$
\item[(3)] $R(g_{i}) \ge 0$ on $M$ for all $i,$
\item[(4)] $\int_{M} R^{a,b,c}_{f_{i}}(g_{i})\, e^{-f_{i}} d\mathrm{vol}_{g_{i}} \ge \kappa~~(\kappa \in \mathbb{R})$, where $\int_{M} R^{a,b,c}_{f_{i}}(g_{i})\, e^{-f_{i}} d\mathrm{vol}_{g_{i}}$ is the weighted total scalar curvature defined as in Problem \ref{prob-2} above.
\end{itemize}
Then
\[
\int_{M} R^{a,b,c}_{f}(g)\, e^{-f} d\mathrm{vol}_{g} \ge \kappa.
\]
That is, Problem \ref{prob-2} holds true with $W^{1, p}~(p > n^{2}/2)$-convergence of metrics and $C^{0}$-uniformly convergence of $\Lambda$-Lipschitz functions.
\end{main}
\begin{rema}
\begin{itemize}
  \item For the case of $\kappa = 0$, $f_{i} \equiv f \equiv 0$, this claim follows from Theorem \ref{gromov-limit}, Fact \ref{theo-3} respectively.
\item The assumption $p > n^{2}/2$ is necessary for the technical reason in order to obtain Lemma \ref{lemm-sub}.
And, the assumption $R(g_{i}) \ge 0$ is necessary for the technical reason in order to obtain Lemma \ref{lemm-key}.
Nevertheless, we speculate the assumptions $R(g_{i}) \ge 0~(i = 1, 2, \cdots)$ in Main Theorem  \ref{theo-2} may not be needed.
\item This is non-trivial even in the two-dimensional case because $f_{i}$ is non-constant in general,
hence we cannot use the Gauss-Bonnet theorem.
\item For example, $(1)$ and $(2)$ are automatically satisfied in case that $e^{-f_{i}} d\mathrm{vol}_{g_{i}} = d\mathrm{vol}_{g_{0}} = e^{-f} d\mathrm{vol}_{g}$ for some $C^{0}$ metric $g_{0}$,
and $g_{i} \overset{C^{1}}{\longrightarrow} g.$
Indeed, since each $f_{i}$ can be locally written as $f_{i} = \log (\sqrt{\mathrm{det}\, g_{i}}) - \log (\sqrt{\mathrm{det}\, g_{0}})$ ($f$ is also represented in the same form) and $g_{i} \overset{C^{1}}{\longrightarrow} g,$
so the norm of the first derivatives $(|\nabla f_{i}|_{g})_{i}$ are uniformly bounded and $f_{i} \rightarrow f$ uniformly on $M.$
\end{itemize}
\end{rema}
From the Morrey embedding:
\[
 W^{1, p} \hookrightarrow C^{0, 1 - \frac{n}{p}} ~~~(p > n),
\]
we immediately obtain the following corollary of Theorem \ref{gromov-limit} and Main Theorem \ref{theo-2}.
\begin{coro}
Let $a,b,c \in \mathbb{R}$.
Let $\mathcal{M}$ be the space of all $C^{2}$-Riemannian metrics on a closed manifold $M$ and $\mathrm{Lip}(M)$ the space of all Lipschitz continuous functions on $M$.
Then, for each $p > n^{2}/2$, any nonnegative continuous function $\sigma \in C^{0}(M, \mathbb{R}_{\ge 0})$, positive constant $\Lambda > 0$ and constant $\kappa \in \mathbb{R},$ the subspace
\[
\mathcal{M}^{a,b,c}_{\sigma, \Lambda, \kappa} :=
\left\{ (g, f) \in \mathcal{M} \times \mathrm{Lip}(M)~\middle|~R_{g} \ge \sigma,f : \Lambda\mathrm{-Lipschitz},~\int_{M} R^{a,b,c}_{f}(g)e^{-f}\, d\mathrm{vol}_{g} \ge \kappa \right\}
\]
is closed in $\mathcal{M} \times \mathrm{Lip}(M)$ with respect to the product topology the $W^{1, p}$ and the $C^{0}$-topology.
\end{coro}
In exactly the same way as in the proof of Main Theorem \ref{theo-2} (more simply), one can also prove the following.
\begin{coro}\label{coro-weighted-scalar}
    The same statement of Main Theorem \ref{theo-2} as with replacing the integrands $R^{a,b,c}_{f_{i}}(g_{i}), R^{a,b,c}_{f}(g)$ with $R(g_{i}), R(g)$ respectively still holds. 
\end{coro}
As a corollary of this, we can obtain the following and from it, we can also make a definition of scalar curvature lower bounds in a weak sense.
\begin{coro}[$=$ Corollary \ref{coro-distr}]
\label{coro-distr-0}
  Let $p > n^{2}/2$ and $\kappa$ a constant.
  Suppose that $M$ is a closed manifold of dimension $n \ge 2,$ $g$ is a $C^{2}$ Riemannian metric on $M$ and $(g_{i})$ is a sequence of $C^{2}$ metric on $M.$
  Assume the following:
  \begin{itemize}
    \item[(1)] a sequence $(\phi_{i})$ of nonnegative continuous functions on $M$ satisfying:
    for any positive constant $a > 0$ there is a positive constant $\Lambda > 0$ such that $\log (\phi_{i} + a)$ is $\Lambda$-Lipschitz on $M$ for all $i,$
    \item[(2)] $(\phi_{i})$ converges to some nonnegative continuous function $\phi$ in the uniformly $C^{0}$-sense on $M,$
    \item[(3)] $R(g_{i}) \ge 0$ on $M$ for each $i,$
    \item[(4)] $\int_{M} R(g_{i}) \phi_{i}\, d\mathrm{vol}_{g_{i}} \ge \kappa \int_{M} \phi_{i}\, d\mathrm{vol}_{g_{i}},$
    \item[(5)] $g_{i}$ converges to $g$ in the $W^{1, p}$-sense.
  \end{itemize}
  Then,
  \[
  \int_{M} R(g) \phi\, d\mathrm{vol}_{g} \ge \kappa \int_{M} \phi\, d\mathrm{vol}_{g}.
             \]
\end{coro}
\begin{rema}
\label{rema-distr}
  From $(3), (5)$ and Remark \ref{rema-distr-smooth}, it is known that $\int_{M} R(g) \psi\, d\mathrm{vol}_{g} \ge 0$ for all \textbf{smooth} nonnegative function $\psi.$
  Hence it is reasonable to consider case that $\kappa \ge 0$ in this setting.
\end{rema}
We give the necessary notions and prove this corollary in Section \ref{section-coro}.
Based on this type of limit theorem, we can define a new generalized notion of scalar curvature lower bound via the existence of certain types of approximate sequences as follows.
\begin{defi}
\label{defi-new}
  Let $M^{n}$ be a smooth closed $n$-manifold and $\kappa$ a constant. For any $W^{1, p}~(p > n^{2}/2)$ metric $g$ on $M,$ $g$ is of $R(g) \ge \kappa$ in \textit{the approximate distributional sense} if
  for any nonnegative continuous function $\phi$ there is an approximate sequence $(\phi_{i})$ satisfying $(1), (2)$ in Corollary \ref{coro-distr-0}, and there exists a $W^{1, p}~(p > n^{2}/2)$-approximate sequence of $C^{2}$-metrics $(g_{i})$ satisfying $(3)$-$(5)$ in Corollary \ref{coro-distr-0}. 
\end{defi}
Suppose now that a metric $g$ in Definition \ref{defi-new} is actually $C^{2}.$
Then, from Corollary \ref{coro-distr-0}, $R(g) \ge \kappa$ in the approximate distributional sense on $M$ implies that the same bound $R(g) \ge \kappa$ holds in the distributional sense on $M.$
Then, as a result of Remark \ref{rema-distr-smooth}, $R(g) \ge \kappa$ in the conventional sense .
Note that $R(g) \ge 0$ in the conventional sense in this case due to $(3)$ of Corollary \ref{coro-distr-0} and Theorem \ref{gromov-limit}. 

\smallskip
As mentioned above, we cannot weaken the assumption ``$C^{0} \cap W^{1,2}$-convergence'' to ``$C^{0}$-convergence'' in Fact \ref{theo-3}.
On the other hand, if we assume that each metric is a Ricci soliton with an additional assumption, then we can obtain a similar statement under weaker $C^{0}$-convergence of metrics.
Our second main theorem is the following.
\begin{main}
\label{theo-1}
Let $M^{n}$ be a closed $n$-manifold and $g$ a $C^{2}$ Riemannian metric on $M.$
Let $(g_{i})$ be a sequence of Ricci solitons on $M$ (i.e., $-2\, \mathrm{Ric}(g_{i}) = \mathcal{L}_{Y_{i}} g_{i} - 2\lambda_{i}\, g_{i}$ for some constant $\lambda_{i} \in \mathbb{R}$ and a vector field $Y_{i} \in \Gamma(TM)$) such that 
$g_{i}$ converges to $g$ on $M$ in the $C^{0}$-sense as $i \rightarrow \infty.$
Assume
\[
\int_{M} R(g_{i})\, d\mathrm{vol}_{g_{i}} \ge \kappa~~~\mathrm{for~some~constant}~\kappa \in \mathbb{R}.
\]
Moreover, assume that $\lambda_{i} \le C_{+}$ for all $i$ and some constant $C_{+} \in \mathbb{R}$ if $\kappa \ge 0$ (resp. $\lambda_{i} \ge C_{-}$ for some $C_{-} \in \mathbb{R}$ if $\kappa < 0$).
Then
\[
\int_{M} R(g)\, d\mathrm{vol}_{g} \ge \kappa.
\]
\end{main}
On a complete manifold, every Ricci soliton is a self-similar solution of the Ricci flow equation, and vice versa.
This self-similarity is one of the reasons why the assumption of convergence in Main Theorem \ref{theo-1} can be weaker than $W^{1, p}.$ 

\medskip
The rest of the paper will be arranged as follows.
In Section \ref{section-preliminaries}, we will prepare some lemmas to prove our main theorems in the next section.
In Section \ref{section-proof}, we will prove our main theorems.
In Section \ref{section-coro}, we will prove some corollaries of our main theorems.
In Section \ref{section-examples}, we will give several examples that suggest that the assumptions of regularity or compactness of manifolds in Fact \ref{theo-3} and our main theorems are optimal in some sense.

\section{Preliminaries}
\label{section-preliminaries}
First, we need the following stability result for the Ricci-DeTurck flow like what is proved in {\cite[Lemma~2]{bamler2016proof}}.
More precisely, we need the following.
\begin{lemm}[\cite{simon2002deformation, koch2015parabolic}]
\label{lemm-1}
Let $(M, h)$ be a closed Riemannian manifold endowed with a $C^{2}$-Riemannian metric $h.$
Then, there are constants $\tau, \varepsilon > 0,~C < \infty$
such that the following is true:
Consider a $C^{2}$-Riemannian metric $g$ that is 
$(1+ \epsilon)$-bilipschitz close to $h.$
Then there is a continuous family of Riemannian metrics $(g_{t})_{t \in [0,\tau]}$ on $M$ such that the following holds:
\begin{itemize}
\item[$(a)$] For all $t \in [0, \tau],$ the metric $g_{t}$ is $1.1$-bilipschitz to $h.$

\item[$(b)$] $(g_{t})$ is smooth on $M \times (0, \tau]$ and the map $[0, \tau] \rightarrow C^{2}(M, S^{2}M),~t \mapsto g_{t}$ is continuous.
In particular, $t \mapsto R_{g_{t}}$ is continuous on $[0, \tau]$.

\item[$(c)$] $g_{0} = g$ and $(g_{t})_{t \in [0, \tau]}$ is a solution to the \textit{Ricci~DeTurck~flow~equation}
\begin{equation}\label{eq-RDF}
\frac{\partial}{\partial t} g_{t} = -2~\mathrm{Ric} (g_{t}) - \mathcal{L}_{X_{h}(g_{t})} g_{t},
\end{equation}
where $\mathcal{L}_{X_{h}(g_{t})} g_{t}$ denotes the Lie derivative of $g_{t}$ with respect to the time-dependent vector field $X_{h}(g_{t})$ defined in Remark \ref{rema-lemm-1} below.

\item[$(d)$] For any $t  \in (0, \tau]$ and any $m = 0, 1, 2$ we have
\[
|\nabla_{h}^{m} g_{t}|_{h} < \frac{C}{t^{m/2}}
\]
where $|\nabla_{h}^{m} g_{t}|_{h}$ denotes the norm with respect to $h$ of the covariant derivatives of $g(t)$ by the Levi-Civita connection of $h.$ 

\item[$(e)$] If $(g_{i, t})_{t \in [0, \tau]}$ is a sequence of solutions to (RDE) that are continuous on $M \times [0, \tau]$
and smooth on $M \times (0, \tau]$ and if $g_{i, 0}$
converges to some metric $g_{0}$ in the $C^{0}$-sense,
then there is a subsequence of $(g_{i, t})$ that converges to $(g_{t})$ in the $C^{0}$-sense on $M \times [0, \tau]$ and in the locally smooth sense on $M \times (0, \tau]$ with respect to $h.$
\end{itemize}
\end{lemm}

\begin{proof}
The items $(a), (c)$ and $(d)$ are the result of {\cite[Theorem~1.1]{simon2002deformation}}.
$(e)$ is the result of {\cite[Corollary 3.4]{burkhardt2019pointwise}}.
Differentiate the equation $(66)$ in \cite{shi1989deforming} twice with respect to $\nabla_{h}$, then it satisfies a parabolic equation treated in \cite{koch2015parabolic} (see also the last part of {\cite[Proof of Lemma 2]{bamler2016proof}}).
Then, $(b)$ follows from {\cite[Theorem 4.2]{koch2015parabolic}}.
\end{proof}

Thanks to {\cite[Theorem~3.11]{jiang2023weak}}, if we assume that $g_{i}$ converges to $g$ in the $W^{1, p}~(p > n)$-sense, we also obtain the following.
\begin{lemm}[cf. {\cite[Theorem~4.3]{simon2002deformation}}]
\label{lemm-basic}
Let $(M, g)$ be a closed Riemannian manifold endowed with a $C^{2}$-Riemannian metric $g.$
Then there are constants $\varepsilon,~\tau > 0$ and $C < \infty$
such that the following is true:
Consider a $C^{2}$-Riemannian metric $g'$ on $M$ that is 
$\varepsilon$-close to $g$ in the $W^{1, p}~(p > n)$-sense with respect to $g$
(i.e., $|g - g'|_{W^{1, p}(M, g)} < \varepsilon,$ where $|g - g'|_{W^{1, p}(g)}$ denotes the $W^{1, p}$-norm of the $(0, 2)$-tensor $g - g'$ with respect to $g$).
Then there is a continuous family of Riemannian metrics $(g'_{t})_{t \in [0,\tau]}$ on $M$ such that $(a)$-$(c)$ and $(e)$ in Lemma \ref{lemm-1} hold.
Moreover, the following $(d')$ holds instead of $(d)$ in Lemma \ref{lemm-1}.
\begin{itemize}
\item[$(d')$] For any $t \in (0, \tau]$ we have
\[
|\nabla_{g} g'_{t}|_{g} < \frac{C}{t^{n/2p}}~~\mathrm{and}~~|\nabla_{g}^{2} g'_{t}|_{g} < \frac{C}{t^{\frac{n}{4p} + \frac{3}{4}}}
\]
\end{itemize}
\end{lemm}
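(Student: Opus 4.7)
The plan is to reduce the lemma to two previously-established results: Lemma \ref{lemm-1} for items $(a)$–$(c)$ and $(e)$, and Theorem~3.11 of \cite{jiang2023weak} for the sharper derivative estimate $(d')$. First I would use the Morrey embedding $W^{1,p} \hookrightarrow C^{0,1-n/p}$ (valid since $p > n$) to argue that $W^{1,p}$-closeness of $g'$ to $g$ implies uniform $C^{0}$-closeness, and in particular $(1+\epsilon)$-bilipschitz equivalence, once the $W^{1,p}$-smallness parameter $\varepsilon$ is chosen small enough (depending on $g$, $n$, $p$, and the injectivity radius of $g$). Applying Lemma \ref{lemm-1} with $h = g$ then produces the family $(g'_{t})_{t \in [0,\tau]}$ solving the Ricci–DeTurck flow together with the conclusions $(a)$, $(b)$, $(c)$, and the stability statement $(e)$.

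The substantive new content is $(d')$, whose blow-up rates $t^{-n/(2p)}$ and $t^{-(n/(4p) + 3/4)}$ are strictly milder than the $t^{-1/2}$, $t^{-1}$ bounds coming from $C^{0}$ initial data in Lemma \ref{lemm-1}$(d)$, and reflect the extra derivative of regularity encoded in $W^{1,p}$. For this, I would invoke the Koch–Lamm $L^{\infty}$-theory for quasilinear parabolic equations of Ricci–DeTurck type, as refined in $W^{1,p}$ initial data by Jiang \cite[Theorem~3.11]{jiang2023weak}. The exponent $n/(2p)$ is precisely the one dictated by parabolic scaling: rescaling at scale $\sqrt{t}$, the $W^{1,p}$ norm of the difference $g' - g$ gains a factor $t^{-n/(2p)}$ relative to $L^{\infty}$, and bootstrapping one further derivative via the linearized equation yields the $t^{-(n/(4p) + 3/4)}$ estimate for $\nabla_{g}^{2}g'_{t}$. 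These exponents agree with those obtained by Jiang, so no new analysis is needed beyond a direct citation.

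The most delicate step conceptually, which I would \emph{not} redo here, is the Koch–Lamm/Jiang smoothing estimate itself: one linearizes the Ricci–DeTurck equation around $g$, controls the nonlinearity in a parabolic function space adapted to $W^{1,p}$ data, and extracts the quantitative smoothing from Schauder-type estimates combined with the natural scaling of the flow. For the purposes of this paper it is enough to quote \cite{jiang2023weak} together with Lemma \ref{lemm-1}; the sole contribution of Lemma \ref{lemm-basic} is to package both conclusions into a single statement in which $W^{1,p}$-closeness of the initial data (the natural hypothesis in Main Theorems \ref{theo-3} and \ref{theo-2}) replaces the $C^{0}$-closeness assumption of Lemma \ref{lemm-1}.
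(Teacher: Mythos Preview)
Your proposal is correct and matches the paper's own argument essentially step for step: the paper explicitly states that items $(a)$--$(c)$ and $(e)$ are inherited from Lemma~\ref{lemm-1} (with the Morrey embedding supplying the bilipschitz closeness, exactly as you outline), and its proof of $(d')$ consists solely of observing that the $W^{1,p}$-closeness hypothesis yields the $W^{1,p}$-boundedness assumption $\int_{M}|\nabla_{g}g'|^{p}\,d\mathrm{vol}_{g}\le A(\varepsilon,g)$ required to invoke \cite[Theorem~3.11]{jiang2023weak}. Your additional heuristic explanation of the scaling exponents is extra commentary, not a different route.
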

\begin{proof}
The statements except for the item $(d')$ are the results of Lemma \ref{lemm-1} $(a)$-$(c), (e)$ respectively.
The item $(d')$ is the result of {\cite[Theorem~3.11]{jiang2023weak}}.
Indeed, since $g'$ is $W^{1, p}$-close to $g,$ the $W^{1, p}$-bounded assumption in {\cite[Theorem~3.11]{jiang2023weak}}:
\[
\int_{M} |\nabla_{g} g'|^{p}\, d\mathrm{vol}_{g} \le A = A(\varepsilon, g)
\] 
is satisfied.
Therefore, we obtain the assertion from {\cite[Theorem~3.11]{jiang2023weak}}.
\end{proof}

\begin{rema}
\label{rema-lemm-1}
Let $(g_{t})_{t \in [0, T)}~(0 < T)$ be a solution of the Ricci-DeTurck flow equation with $g_{0} = g.$
Choose a background metric $\bar{g}$ on $M$ and define the Bianchi operator
\begin{equation}
\label{RDV}
X^{i}_{\bar{g}} (h) = (\bar{g} + h)^{ij} (\bar{g} + h)^{pq} \left( - \nabla^{\bar{g}}_{p} h_{qj} + \frac{1}{2} \nabla^{\bar{g}}_{j} h_{pq} \right),
\end{equation}
which assigns a vector field to every symmetric 2-form $h$ on $M.$
Let $(\Phi_{t})_{t \in I}$ be the flow generated by the time-dependent family of vector fields $X_{\bar{g}}(g_{t}),$ i.e., 
\begin{equation}
\label{RDD}
\frac{\partial}{\partial t} \Phi_{t} = X_{\bar{g}} (g_{t}) \circ \Phi_{t}~~\mathrm{with}~~\Phi_{0} = \mathrm{id}_{M}.
\end{equation}
We call this flow the \textit{Ricci-DeTurck deiffeomorphism} associated to $g_{t}$ below.
Then $\tilde{g}_{t} := \Phi^{*}_{t} g_{t}$ satisfies the \textit{Ricci flow equation}:
\begin{equation}\label{eq-ricciflow}
\frac{\partial}{\partial t} \tilde{g}_{t} = -2\, \mathrm{Ric}(\tilde{g}_{t})~~\mathrm{with}~~\tilde{g}_{0} = g_{0} = g.
\end{equation}
For each $g_{i}$, from Lemma \ref{lemm-basic}, we have a Ricci-DeTurck flow $g_{i}(t)$ defined on $[0, \tau]$ for some positive time $\tau$ (i.e., $g_{i}(t)$ satisfies (\ref{eq-RDF}) for all $t \in [0, \tau]$), which is independent of $i$.
We also have the corresponding Ricci flow $\tilde{g}_{i}(t)$ and the Ricci-DeTurck diffeomorphism $\Phi_{i, t}$ both defined on the same interval $[0, \tau]$.
%Indeed, we assume that there is the largest time $0 < t_{0} < %\tau$ so that the Ricci-DeTurck diffeomorphism exists on $[0, %t_{0})$.
%On the other hand, since the Ricci-DeTurck flow $g_{i}(t)$ is %defined on the interval $[0, \tau]$, from the Shi-type %estimates for $g_{i}(t)$, we can show that $\Phi_{i, t}$ %converges to a diffeomorphism $\Phi_{i, t_{0}}$ as $t \nearrow %t_{0}$ by the similar argument in the following proof of Lemma %\ref{lemm-sub}.
%This contradicts the definition of the time $t_{0}$.
Moreover, under the condition of Lemma \ref{lemm-1} $(e)$, we can see that for each $t_{0} \in (0,\tau),$
$\{ \tilde{g}_{i}(t) \}_{t \in [t_{0} , \tau]}$ smoothly subconverges to $\{ \tilde{g}(t) \}_{t \in [t_{0}, \tau ]}.$
\end{rema}

The next lemma will be used in the proof of Main Theorem \ref{theo-2}.
In particular, the assumption $p > n^{2}/2$ is needed for this lemma.
\begin{lemm}[Subconvergence of the Ricci-DeTurck diffeomorphisms]
\label{lemm-sub}
Let $M$ be a smooth manifold and $g$ a $C^{2}$-Riemannian metric on $M.$
Assume a sequence of $C^{2}$-Riemannian metrics $(g_{i})$ on $M$ converges to $g$ on $M$ in the $W^{1, p}~(p > n^{2}/2)$-sense.
let $(\Phi_{i,t})_{t \in [0, \tau)}$ be the corresponding Ricci-DeTurck diffeomorphism of $g_{i}$ with background metric $g$ defined as in (\ref{RDD}).
 Then, there is a subsequence of $(\Phi_{i,t})_{t \in [0, \tau/2]}$ that converges to a time-dependent map $(\Phi_{t})_{t \in [0, \tau/2]}$ such that
 \begin{itemize}
     \item $\Phi_{t}$ is a homeomorphism for all $t \in [0, \tau/2]$,
     \item for all $t \in [0, \tau/2]$, $\Phi_{t}, \Phi_{t}^{-1}$ are continuously differentiable and $\mathrm{d} \Phi_{t}$ and $\mathrm{d} \Phi^{-1}_{t}$ are mutually inverse,
     \item there is a subsequence $(\Phi_{i_{k}, t})_{t \in [0, \tau/2]}$ such that for all $t \in [0, \tau/2],~\Phi_{i_{k}, t}, \Phi_{i_{k}, t}^{-1}, \mathrm{d} \Phi_{i_{k}, t}$ and $\mathrm{d} \Phi_{i_{k}, t}^{-1}$ are converges to $\Phi_{t}, \Phi_{t}^{-1}, \mathrm{d} \Phi_{t}$ and $\mathrm{d} \Phi_{t}^{-1}$ respectively,
     \item $\Phi_{0} = \mathrm{id}_{M}$, where $\mathrm{id}_{M} : M \rightarrow M$ denotes the identity map.
 \end{itemize}
\end{lemm}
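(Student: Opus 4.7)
The strategy is to view $\Phi_{i,t}$ as the flow of the time-dependent vector field $X_g(g_{i,t})$ defined by (\ref{RDV}), and to extract a subsequential limit by Arzel\`a-Ascoli. Applying Lemma \ref{lemm-basic} with background metric $g$ to each $g_i$ (which is legitimate for large $i$ since $g_i \to g$ in $W^{1,p}$), one obtains a Ricci-DeTurck flow $(g_{i,t})_{t\in[0,\tau]}$ satisfying the uniform smoothing estimates in $(d')$. From the algebraic form of $X_g$ in (\ref{RDV}), together with the $1.1$-bilipschitz bound from Lemma \ref{lemm-basic}$(a)$, one can derive pointwise estimates of the form
\[
|X_g(g_{i,t})|_g \le C_1\, t^{-n/(2p)}, \qquad |\nabla_g X_g(g_{i,t})|_g \le C_2\bigl(t^{-n/p} + t^{-n/(4p)-3/4}\bigr),
\]
with constants independent of $i$. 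The hypothesis $p > n^2/2$ forces each of the exponents $n/(2p)$, $n/p$ and $n/(4p)+3/4$ to be strictly less than $1$, so both right-hand sides are integrable in $t$ on $(0,\tau/2]$.

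With these integrable-in-time bounds, a standard Gr\"onwall argument applied to the flow equation (\ref{RDD}) yields a uniform-in-$i$ modulus of continuity for $\Phi_{i,t}$ in the time variable, namely $d_g(\Phi_{i,t}(x),\Phi_{i,s}(x)) \le \int_s^t \|X_g(g_{i,u})\|_{L^\infty}\,du \lesssim |t-s|^{1-n/(2p)}$. Differentiating (\ref{RDD}) spatially, the Jacobians $\mathrm{d}\Phi_{i,t}$ solve the linear variational ODE $\partial_t (\mathrm{d}\Phi_{i,t}) = (\nabla_g X_g(g_{i,t}))|_{\Phi_{i,t}} \cdot \mathrm{d}\Phi_{i,t}$, and Gr\"onwall gives a uniform bound $|\mathrm{d}\Phi_{i,t}|_g \le \exp\bigl(\int_0^t\|\nabla_g X_g(g_{i,s})\|_{L^\infty}\,ds\bigr)$ together with the analogous equicontinuity in $t$. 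The inverse flow $\Phi_{i,t}^{-1}$ satisfies the same type of ODE run backwards, so $\mathrm{d}\Phi_{i,t}^{-1}$ admits matching bounds.

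Arzel\`a-Ascoli on $M \times [0,\tau/2]$ then furnishes a subsequence along which $\Phi_{i_k,t}$, $\Phi_{i_k,t}^{-1}$, $\mathrm{d}\Phi_{i_k,t}$, and $\mathrm{d}\Phi_{i_k,t}^{-1}$ all converge uniformly to continuous limits $\Phi_t,\Psi_t,J_t,J_t'$. By Lemma \ref{lemm-basic}$(e)$ and Remark \ref{rema-lemm-1}, $g_{i,t}$ subconverges to the Ricci-DeTurck flow $g_t$ of $g$ locally smoothly on $M \times (0,\tau]$, so one can pass to the limit in the ODEs on any time interval $[t_0,\tau/2]$ with $t_0 > 0$. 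This identifies $\Phi_t$ as the flow of $X_g(g_t)$ on $(0,\tau/2]$, forces $J_t = \mathrm{d}\Phi_t$, $\Psi_t = \Phi_t^{-1}$, $J_t' = \mathrm{d}\Phi_t^{-1}$, and lets the identity $\mathrm{d}\Phi_{i_k,t}\cdot \mathrm{d}\Phi_{i_k,t}^{-1} = \mathrm{id}$ pass to the limit to show $\Phi_t$ is a $C^1$-diffeomorphism for each $t$.

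The main obstacle is the behavior at $t=0$: neither $X_g(g_{i,t})$ nor $\nabla_g X_g(g_{i,t})$ is bounded as $t \downarrow 0$, and $g_{i,0} = g_i$ converges to $g$ only in $W^{1,p}$ rather than $C^1$, so one cannot expect smooth convergence at the initial time. The identification $\Phi_0 = \mathrm{id}_M$ and the consistency of the initial derivative are forced by the integrability of the above bounds near $0$: the uniform estimate $\int_0^t \|X_g(g_{i,s})\|_{L^\infty}\,ds \lesssim t^{1-n/(2p)} \to 0$ (and the analogous vanishing for the Jacobian deviation) compels every subsequential limit at $t=0$ to be the identity. It is precisely this step that makes critical use of the quantitative requirement $p > n^2/2$, which guarantees that all three exponents appearing in the Gr\"onwall integrals are subunit.
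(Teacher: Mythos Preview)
There is a genuine gap. To apply Arzel\`a--Ascoli to the family $(\mathrm{d}\Phi_{i,t})$ you need not only uniform boundedness and equicontinuity in $t$ (which your Gr\"onwall arguments do supply) but also equicontinuity in the \emph{spatial} variable. That requires a modulus of continuity for $x \mapsto \nabla_g X_g(g_{i,t})(x)$, hence control on $\nabla_g^2 g_{i,t}$ beyond the mere $L^\infty$ bound in Lemma~\ref{lemm-basic}$(d')$; at minimum one needs a uniform H\"older estimate of the form $\|\nabla_g^2 g_{i,t}\|_{C^\alpha(M,g)} \le C\,t^{-a}$ with $a<1$. Your proposal never establishes such a bound, so the claimed subconvergence of $\mathrm{d}\Phi_{i_k,t}$ is unjustified.

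This missing spatial H\"older estimate is exactly where the paper invokes $p > n^{2}/2$. The paper applies a parabolic $W^{2,1}_q$-estimate to the evolution equation satisfied by $\nabla_g g'_t$ and then Sobolev embedding, and the condition $p > n^{2}/2$ is what permits the choice of an exponent $q > n$ (so that $W^2_q \hookrightarrow C^\alpha$) compatible with the integrability constraints coming from the right-hand side. Your diagnosis of the role of $p > n^{2}/2$ is therefore mistaken: the integrability in $t$ of $t^{-n/(2p)}$, $t^{-n/p}$, and $t^{-n/(4p)-3/4}$ already follows from $p > n$ (since then $n/p < 1$ and $n/(4p)+3/4 < 1/4+3/4 = 1$); the stronger hypothesis is needed not for time integrability in the Gr\"onwall step but for the spatial regularity upgrade that feeds the compactness argument for the Jacobians.
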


\begin{proof}
First note that there are a positive time $\tau = \tau(M, g) > 0$ and a positive constant $C = C(M, g) < \infty$ such that  for sufficiently large $i,$ Lemma \ref{lemm-basic} holds for $g$ and $g_{i}.$

\smallskip
\noindent
\underline{\textbf{Step 1($C^{0}$-convergence)}}:
From Lemma \ref{lemm-basic} $(d')$ and the definition (\ref{RDV}), the norm (with respect to $g$) of the time-dependent vector field defined in (\ref{RDV}) is bounded by some positive constant  $C = C(M, g).$ 
Thus, applying Gronwall's lemma to (\ref{RDD}), by the same argument in the proof of Lemma 2.1 in \cite{burkhardt2019pointwise}, one can obtain that
\begin{equation}
\label{ineq-1}
d_{g}(\Phi_{i,t}(p), \Phi_{i,s}(p)) \le C |t^{1 - \frac{n}{2p}} - s^{1 - \frac{n}{2p}}|,~~~~t,s \in [0, \tau/2],~p \in M.
\end{equation}
Here, $C$ depends only on $M, g$ and $\tau.$
Unlike that in {\cite[Lemma 2.1]{burkhardt2019pointwise}}, one can get the above type of estimate (in particular, the right-hand side is not $|\sqrt{t} - \sqrt{s}|$ but $|t^{1 - \frac{n}{2p}} - s^{1 - \frac{n}{2p}}|$).
This follows from the first-derivative estimate (Lemma \ref{lemm-basic} $(d')$) and hence $|X_{g}(g_{i, t})|_{g}$ is bounded by $C t^{\frac{n}{2p}}$ where $C = C(M, g)$ is a positive constant.
On the other hand, taking the derivative of both sides of (\ref{RDD}), and using the estimate of the second derivatives (Lemma \ref{lemm-basic} $(d')$), we obtain that
\[
 \frac{\partial}{\partial t}\, |\mathrm{d} \Phi_{i,t} |_{g} \le \frac{C}{t^{\frac{n}{4p} + \frac{3}{4}}} |\mathrm{d} \Phi_{i,t}|_{g},~~~~t \in (0, \tau/2],
\]
where $|\mathrm{d} \Phi_{i,t}|_{g}$ denotes the maximum of the operator norm of $\mathrm{d} \Phi_{i,t} : TM \rightarrow TM$ with respect to $g$ on $M.$ 
Hence, for all $t \in ( 0, \tau/2],$ we have
\[
\frac{d}{dt} \left( \frac{u(t)}{v(t)} \right) \le 0,
\]
where $u(t) = |\mathrm{d} \Phi_{i,t}|_{g}$ and $v(t) = e^{C t^{1 - \frac{n}{4p} -\frac{3}{4}}}$.
Then, since $u(t)$ is continuous on $[0, \tau/2],$ from the mean value theorem and this estimate of time-derivative, we have
\begin{equation}
\label{ineq-0}
|\mathrm{d} \Phi_{i,t}|_{g} \le C \exp{t^{1 - \frac{n}{4p} -\frac{3}{4}}},~~~~t \in [0, \tau/2].
\end{equation} 
Since the pullback metric $\Phi^{*}_{i,t}\, g_{i,t}$ satisfies the Ricci flow equation:
\[
\frac{\partial}{\partial t} (\Phi^{*}_{i,t}\, g_{i,t}) = -2\, \mathrm{Ric}(\Phi^{*}_{i,t}\, g_{i,t}),
\]
by Lemma \ref{lemm-basic} $(d')$ and the above estimate, there is a constant $C = C(M, g, \tau)$ such that for all points $(p,t) \in M \times (0, \tau/2]$ and all vectors $v \in T_{p} M,$
\[
\left| \frac{d}{dt} |v|^{2}_{\Phi^{*}_{i,t} g_{i,t}} \right| \le \frac{C}{t^{\frac{n}{4p} + \frac{3}{4}}} |v|^{2}_{\Phi^{*}_{i,t} g_{i,t}}.
\]
From this estimate, Lemma \ref{lemm-basic} $(b)$ and the mean value theorem, we obtain
\[
e^{-C \tau^{1 - \frac{n}{4p} - \frac{3}{4}}} |v|^{2}_{g_{i}} \le |v|^{2}_{\Phi^{*}_{i,t} g_{i,t}} \le e^{C \tau^{1 - \frac{n}{4p} - \frac{3}{4}}} |v|^{2}_{g_{i}}.
\]
Since $g_{i}$ converges to $g$ in the $C^{0}$-sense, for all sufficiently large $i,$ we have
\[
e^{-\tilde{C} \tau^{1 - \frac{n}{4p} - \frac{3}{4}}} |v|^{2}_{g} \le |v|^{2}_{\Phi^{*}_{i,t} g_{i,t}} \le e^{\tilde{C} \tau^{1 - \frac{n}{4p} - \frac{3}{4}}} |v|^{2}_{g}
\]
for some constant $\tilde{C} = \tilde{C}(M, g, \tau).$
Therefore, from Lemma \ref{lemm-basic} $(a),$ the $C^{0}$-closedness of $g_{i}$ and $g,$ and the previous estimate, there is a positive constant $C = C(M, g, \tau)$ such that for all sufficiently large $i,$ 
\begin{equation}
\label{ineq-2}
d_{g}(\Phi_{i,t}(p), \Phi_{i,t}(q)) \le C d_{g_{i,t}}(\Phi_{i,t}(p), \Phi_{i,t}(q)) \le C\, d_{g}(p, q).
\end{equation}
Combining the inequalities (\ref{ineq-1}) and (\ref{ineq-2}), we have
\[
d_{g}(\Phi_{i,t}(p), \Phi_{i,s}(q)) \le C \left( |t^{1 - \frac{n}{2p}} - s^{1 - \frac{n}{2p}}| + d_{g}(p,q) \right),~~~t,s \in [0, \tau/2],~p, q \in M.
\]
Then, by Arzel{\` a}-Ascoli theorem, a subsequence $(\Phi_{i_{k},t})_{t \in [0, \tau/2]}$ of $(\Phi_{i,t})_{t \in [0, \tau/2]}$ converges to a time-dependent map $\Phi_{t} : M \rightarrow M~(t \in [0, \tau/2])$ as $i_{k} \rightarrow \infty.$
In exactly the same way, one can prove that there is a subsequence $(\Phi^{-1}_{i_{k_{l}},t})_{t \in [0, \tau/2]}$ of $(\Phi^{-1}_{i_{k},t})_{t \in [0, \tau/2]}$ that converges to some time-dependent map $\tilde{\Phi}_{t} : M \rightarrow M~(t \in [0, \tau/2])$ as $i_{k_{l}} \rightarrow \infty.$ 
But, since $\Phi^{-1}_{i_{k_{l}},t} \circ \Phi_{i_{k_{l}} ,t} = \mathrm{id}_{M},$
$\Phi^{-1}_{t}$ exists and $\Phi^{-1}_{t} = \tilde{\Phi}_{t}$ for each $t \in [0, \tau/2].$
To prevent complicated in expression, we will simply write this converging subsequence $(\Phi_{i_{k_{l}}, t})_{t \in [0, \tau/2]}$ as $(\Phi_{i, t})_{t \in [0, \tau/2]}.$

\smallskip
\noindent
\underline{\textbf{Step 2($C^{1}$-convergence)}}:
Next, we will show that the first derivatives of $\Phi_{i,t}$ subconverges as $i \rightarrow \infty.$
Let $0 < a < 1.$ 
$\nabla_{g} g'_{t}$ satisfies a parabolic type PDE (see {\cite[Section~4,~Equation~(4)]{shi1989deforming}}); 
\[
\frac{\partial}{\partial t} \left( \nabla_{g} g'_{t} \cdot t^{a} \right) - \Delta_{g} \left( \nabla_{g} g'_{t} \cdot t^{a} \right) = F(g, \nabla_{g} g'_{t}, \nabla^{2}_{g} g'_{t}, \mathrm{Rm}_{g}, \nabla_{g} \mathrm{Rm}_{g}),
\]
and from the derivative estimate Lemma \ref{lemm-basic},
we have
\[
\left\| F(g, \nabla_{g} g'_{t}, \nabla^{2}_{g} g'_{t}, \mathrm{Rm}_{g}, \nabla_{g} \mathrm{Rm}_{g}) \right\|_{g} \le a C t^{a-1-\frac{n}{2p}} + C t^{a-\frac{n}{2p}} + C t^{a-\frac{n}{2p} - \frac{n}{4p} - \frac{3}{4}} + C t^{a-\frac{3n}{2p}}.
\]
for some positive constant $C$.
Thus, by the parabolic $W^{2,1}_{q}$-estimate ({\cite[Ch. 4, Section 3, Theorem 7]{krylov2008lectures}}), we have
\[
||\nabla_{g} g'_{t} \cdot t^{a} ||_{W^{2,1}_{q}(M \times (0, \tau/2], g)} \le C
\]
for all $q$ satisfying $\left( 1 + \frac{n}{2p} - a \right) q < 1$ and $q \ge 2.$
Note that if $\left( 1 + \frac{n}{2p} - a \right) q < 1$ then it holds that
\[
\left( \frac{3n + 3p}{4p} - a \right) q,~\left( \frac{n}{2p} - a \right) q,~\left( \frac{3n}{2p} - a \right) q < 1.
\]
(Recall our assumption $p > n^{2}/2 \ge n$.)
In particular, we have
\[
|| \nabla_{g} g'_{t} ||_{W^{2}_{q}(M, g)} \le C t^{-a}
\]
for all such $a, q$ and $t \in (0, \tau/2].$
Since, $p > n^{2}/2,$ we can choose $0 < a < 1$ sufficiently close to $1$
so that 
\begin{equation}\label{eq-region}
  n < \frac{1}{1 + \frac{n}{2p} - a}.
\end{equation}
Hence, as noted above, the weaker relations:
\[
n < \frac{1}{\frac{3n + 3p}{4p} - a},~\frac{1}{\frac{n}{2p} - a},~\frac{1}{\frac{3n}{2p} - a}
\]
are satisfied as well under the above condition (\ref{eq-region}).
Thus, from Morrey's embedding theorem,
\[
|| \nabla^{2}_{g} g'_{t} ||_{C^{\alpha}(M, g)} \le C t^{-a},~~~~t \in (0, \tau/2]
\]
for some $0 < \alpha < 1.$
Therefore, by the same arguments that derived (\ref{ineq-0}) above, we obtain that
\begin{equation}
\label{ineq-3}
|\mathrm{d} \Phi_{i,t}|_{C^{\alpha}(M, g)} \le C \exp{t^{1-a}},~~~~t \in [0, \tau/2].
\end{equation}
Then, by the inequalities (\ref{ineq-0}) and (\ref{ineq-3}), we can apply the Arzel{\` a}-Ascoli theorem and obtain that there is a subsequence $(\Phi_{i_{k} ,t})_{t \in [0, \tau/2]}$ such that as $i_{k} \rightarrow \infty,$
$(\Phi_{i_{k} ,t})_{t \in [0, \tau/2]} \rightarrow (\Phi_{t})_{t \in [0, \tau/2]}$ for some time-dependent map $(\Phi_{t})_{t \in [0, \tau/2]}.$ Moreover,
$(\Phi_{t})_{t \in [0, \tau/2]}$ are differentiable on $M$ and
$(\mathrm{d} \Phi_{i_{k} ,t})_{t \in [0, \tau/2]} \rightarrow (\mathrm{d} \Phi_{t})_{t \in [0, \tau/2]}$ as $i_{k} \rightarrow \infty.$
Similarly, there is a subsequence $(\Phi_{i_{k_{l}} ,t})_{t \in [0, \tau/2]}$ of $(\Phi_{i_{k} ,t})_{t \in [0, \tau/2]}$ such that
\[
(\Phi^{-1}_{i_{k_{l}} ,t})_{t \in [0, \tau/2]} \rightarrow (\tilde{\Phi}_{t})_{t \in [0, \tau/2]}~~\mathrm{and}~~
(\mathrm{d} \Phi^{-1}_{i_{k_{l}} ,t})_{t \in [0, \tau/2]} \rightarrow (\mathrm{d} \tilde{\Phi}_{t})_{t \in [0, \tau/2]}
\]
as $i_{k_{l}} \rightarrow \infty.$
Since $\Phi_{i_{k_{l}} ,t} \circ \Phi^{-1}_{i_{k_{l}} ,t} =\Phi^{-1}_{i_{k_{l}} ,t} \circ \Phi_{i_{k_{l}} ,t}  = \mathrm{id}_{M}$ and $\mathrm{d} \Phi_{i_{k_{l}} ,t} \circ \mathrm{d} \Phi^{-1}_{i_{k_{l}} ,t} = \mathrm{d} \Phi^{-1}_{i_{k_{l}} ,t} \circ \mathrm{d} \Phi_{i_{k_{l}} ,t} = \mathrm{id}_{TM},$ $\mathrm{d} \Phi_{t}$ are invertible and $\mathrm{d} \Phi_{t}^{-1} = \mathrm{d} \tilde{\Phi}_{t}$ for all $t \in [0, \tau/2].$
Finally, $\Phi_{0} = \mathrm{id}_{M}$ easily follows from the definition (\ref{RDD}) and the above construction of $(\Phi_{t})_{t \in [0, \tau/2]}.$
\end{proof}

\begin{rema}
In Lemma \ref{lemm-sub}, it is not known that the limit $(\Phi_{t})_{t \in [0, \tau/2]}$ is the Ricci-DeTurck diffeomorphism of the Ricci flow (\ref{eq-ricciflow}) starting at $g$ with background metric $g.$
Therefore, let $(g_{t})_{t \in [0. \tau/2]}$ be the Ricci-DeTurck flow (\ref{eq-RDF}) starting at $g$ with background metric $g,$ then we don't know whether or not $(\Phi_{t}^{*} g_{t})_{t \in [0, \tau/2]}$ is the solution of the Ricci flow equation starting at $g.$
\end{rema}

Next, we need the following stability of the heat flow with the Ricci flow background, which is essentially proved by Lee and Tam in {\cite[Theorem~3.1]{lee2022rigidity}}.

\begin{lemm}[{\cite[Theorem~3.1]{lee2022rigidity}}]
\label{lemm-harmonic}
Let $(M^{n}, g_{0})$ be a closed manifold of dimension $n \ge 2$
and $\{ f_{i} \}$ a family of functions on $M$ satisfying the assumptions $(1)$ and $(2)$ in Main Theorem \ref{theo-2}.
Suppose $g(t)~(t \in [0, \tau])$ be a solution of the Ricci flow equation (\ref{eq-ricciflow}) starting at $g_{0}$
such that 
\begin{equation}\label{eq-curv-derivative}
|\mathrm{Rm}(g(t))| \le a \cdot t^{-\left( \frac{n}{4p} + \frac{3}{4} \right)}
\end{equation}
for some $a > 0$ on $( 0, \tau].$
Then for all $i_{0} \in \mathbb{N},$ there are positive constants $\tau_{0}= \tau_{0}(n, \Lambda, i_{0}) > 0$ and $C_{0} = C_{0} (n,a, \Lambda, i_{0}) > 0$ such that the following holds.
For all $i \ge i_{0},$ there exists $F_{i}(t) \in C^{\infty}(M)~(t \in (0, \min \{ \tau, \tau_{0} \})$
satisfies the heat flow equation:
\begin{equation}\label{eq-heatflow}
\frac{\partial}{\partial t} F_{i}(t) = \Delta_{g_{t}} F_{i}(t)
\end{equation}
such that
\begin{itemize}
\item[(A)] $\left( \Lambda^{-2} - 2(n-1)t \right) (F_{i})^{*} g_{Eucl} \le g(t),$
\item[(B)] $\sup_{x \in M} d_{g_{Eucl}} (F_{i}(x,t), f_{i}(x)) \le C_{0} \sqrt{t}.$
Here, $g_{Eucl}$ denotes the Euclidean metric on $\mathbb{R}.$
\end{itemize}
Moreover, for any integer $l \ge 0,$ there is a constant $C = C(n, l, a, \Lambda, i_{0}) > 0$
such that for all $t \in ( 0, \min \{ \tau, \tau_{0} \} ],$
\begin{equation}\label{eq-heat-derivative}
|\nabla^{l} dF_{i}| \le C \cdot t^{-l/2 - \frac{n}{4p} + \frac{1}{4}}.
\end{equation}
\end{lemm}
\begin{proof}
Since $M$ is compact, the image of $f$ is compact in the target space $\mathbb{R}.$
From the assumption $(2)$ in Main Theorem \ref{theo-2},
there is a compact neighborhood $N (\subset \mathbb{R})$ of the image of $f$
such that the image of $f_{i}$ is contained in $N$ for all $i \ge i_{0}.$
Then, from the assumption $(1)$ of Main Theorem \ref{theo-2}, one can apply the proof of Lemma 3.1 and Theorem 3.1 in \cite{lee2022rigidity}
to $(M, g(t)),~(N, h := g_{Eucl}|_{N})$ and $f_{i} : M \rightarrow N \subset \mathbb{R}~(\mathrm{for~all}~i \ge i_{0}).$
Hence we obtain the desired assertions except for (\ref{eq-heat-derivative}) from Theorem 3.1 in \cite{lee2022rigidity}.
Therefore, we give a sketch of proof of (\ref{eq-heat-derivative}) in the rest of the proof.
First of all, from the assumption (\ref{eq-curv-derivative}) and the estimates of Shi, we have
\[
\left| \nabla^{k} \mathrm{Rm}(g(t)) \right| \le C t^{-\left( \frac{n}{4p} + \frac{3}{4} + \frac{k}{2} \right)}
\]
for any integer $k \ge 0$, where a positive constant $C$ depends only on $a, n$ and $k$. 
We will write the constants that depend only on $n, \min \{ \tau, \tau_{0} \}, \Lambda, a, l$ with the same symbol $C$.
And, we denote $\left| D^{l} dF_{i} \right|^{2}$ by $P_{l}$.
Then, as done in {\cite[Proof of Theorem 3.1]{lee2022rigidity}}, we have
\begin{equation}\label{eq-p-estimate}
\left( \frac{\partial}{\partial t} - \Delta_{g(t)} \right) P_{l} \le -2 P_{l} + C \left( t^{-\left( \frac{n}{4p} + \frac{3}{4} \right)} P_{l} + t^{-\left( \frac{n}{4p} + \frac{3}{4} + \frac{l}{2} \right)} P_{l}^{1/2} \right).
\end{equation}
Suppose the estimate holds true up to $l-1$.
Note that $P_{0}$ is the energy density and the desired estimate for $P_{0}$ is the result of {\cite[Theorem 3.1]{lee2022rigidity}}.
The following argument is similar to the one of {\cite[Proof of Theorem 3.1]{lee2022rigidity}}.

From (\ref{eq-p-estimate}), whenever $P_{l} > 0$, we have
\[
\begin{split}
\left( \frac{\partial}{\partial t} - \Delta_{g(t)} \right) \left( t^{A} P^{1/2}_{l} \right) &\le t^{A-1} P_{l}^{1/2} + C t^{A} \left( t^{-\left( \frac{n}{4p} + \frac{3}{4} \right)} P_{l}^{1/2} + t^{-\left( \frac{n}{4p} + \frac{3}{4} + \frac{l}{2} \right)} \right) \\
&= C \left( t^{\frac{l}{2} + \frac{n}{4p} - \frac{3}{4}} + t^{-1/2} \right),
\end{split}
\]
where $A = \frac{l}{2} + \frac{n}{4p} - \frac{1}{4} + \frac{1}{2}$.
On the other hand, from (\ref{eq-p-estimate}) and the induction assumption, we have
\[
\left( \frac{\partial}{\partial t} - \Delta_{g(t)} \right) P_{l-1} \le -2P_{l} + C \left( t^{-\left( \frac{n}{4p} + \frac{3}{4} \right)} t^{B} + t^{-\left( \frac{n}{4p} + \frac{3}{4} + \frac{l}{2} \right)} t^{B/2} \right),
\]
where $B = -\left( \frac{n}{4p} + \frac{3}{4} + \frac{l-1}{2} \right)$.
Hence, 
\[
\left( \frac{\partial}{\partial t} - \Delta_{g(t)} \right) \left( t^{D} P_{l-1} \right) \le -2t^{D} P_{l} + Ct^{D} \left( t^{-\left( \frac{n}{4p} + \frac{3}{4} \right)} t^{B} + t^{-\left( \frac{n}{4p} + \frac{3}{4} + \frac{l}{2} \right)} t^{B/2} \right) + t^{D-1} t^{B},
\]
where $D = l + \frac{n}{4p} + \frac{1}{2}$.
We notice that if $p > n$, 
\[
\left( \frac{\partial}{\partial t} - \Delta_{g(t)} \right) \left( t^{D} P_{l-1} \right) \le -2t^{D}P_{l} + Ct^{D} t^{-\left( \frac{3n}{8p} + \frac{5}{8} + \frac{3l-1}{4} \right)}
\]
and $D - \left( \frac{3n}{8p} + \frac{5}{8} + \frac{3l-1}{4} \right) \ge -\frac{1}{2}$.
Let $G := t^{A} P_{l}^{1/2} + t^{D} P_{l-1} - \alpha t^{1/2}$ for some $\alpha > 0$.
Then, 
\[
\begin{split}
\left( \frac{\partial}{\partial t} - \Delta_{g(t)} \right) G &\le C \left( t^{\frac{l}{2} + \frac{n}{4p} - \frac{3}{4}} + t^{-1/2} \right) -2t^{D} P_{l} + Ct^{-1/2} - \alpha t^{-1/2} \\
&\le C t^{-1/2} - \alpha t^{-1/2}.
\end{split}
\]
Therefore, if we take $\alpha > 0$ sufficiently large, we have 
\[
\left( \frac{\partial}{\partial t} - \Delta_{g(t)} \right) G < 0
\]
at the points where $P_{l} > 0$.
Since $G = 0$ at $t = 0$, we conclude that $G \le 0$, hence, 
\[
t^{A - 1/2} P_{l}^{1/2} \le 0.
\]
This completes the proof of (\ref{eq-heat-derivative}).
\end{proof}

The following lemma is a key to proving our main theorems.
Note that we only need $p > n$ for it.
\begin{lemm}[cf.~{\cite[Lemma~4]{bamler2016proof}}]
\label{lemm-key}
Let $M^{n}$ be a closed $n$-manifold ($n \ge 2$) and $g$ a $C^{2}$-Riemannian metric on $M.$
Suppose $f : M \rightarrow \mathbb{R}$ be a $\Lambda$-Lipschitz function for some $\Lambda > 0.$
Assume that a $C^{2}$-Riemannian metric $g'$ on $M$ is sufficiently close to $g$ in the $W^{1,p}(M, g)$-sense so that Lemma \ref{lemm-basic} holds for $g'$. 
Then for any given positive constant $\delta > 0,$ there is a constant
$\tau = \tau \left( M, g, |g - g'|_{W^{1, p}(M,g)}, \delta, \Lambda \right) > 0$ such that the following holds:
Assume that $\int_{M} R^{a,b,c}_{f}(g')\, e^{-f} d\mathrm{vol}_{g'} \ge \alpha$
for some $\alpha \in \mathbb{R}$ and $R(g') \ge 0$ on $M.$
Then there is a solution $(g'_{t})_{t \in [0, \tau]}$ to the Ricci flow equation (\ref{eq-ricciflow}) with the initial metric $g'$ and a solution $(f_{t})_{t \in (0, \tau]}$ of the heat flow equation (\ref{eq-heatflow}) with the Ricci flow background such that $\int_{M} R^{a,b,c}_{f_{t}}(g'_{t})\, e^{-f_{t}} d\mathrm{vol}_{g'(t)} \ge \alpha - \delta$
for all $t \in [0, \tau].$
\end{lemm}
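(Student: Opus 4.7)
My plan is to use Lemma~\ref{lemm-basic} to build the Ricci flow $(g'_t)$ and Lemma~\ref{lemm-harmonic} to build the heat flow $(F_t)$, and then to deduce the estimate by a right-continuity argument at $t = 0$ for the weighted integral
\[
I(t) := \int_M R(g'_t)\, e^{-F_t}\, d\mathrm{vol}_{g'_t}.
\]
By hypothesis $I(0) := \int_M R(g')\, e^{-f}\, d\mathrm{vol}_{g'} > a$, so if $I$ is right-continuous at $0$ then $\tau$ can be chosen small enough that $I(t) > a - \delta$ on $[0, \tau]$, which is the desired conclusion.

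Concretely, I would first apply Lemma~\ref{lemm-basic} to the $C^2$ metric $g'$ (which is $W^{1, p}$-close to $g$) to obtain a Ricci-DeTurck flow $(\tilde g'_t)_{t \in [0, \tau_1]}$ with initial metric $g'$. Letting $\Phi_t$ be the Ricci-DeTurck diffeomorphism as in Remark~\ref{rema-lemm-1}, the pullback $g'_t := \Phi_t^* \tilde g'_t$ is a Ricci flow starting at $g'$. The estimates $(d')$ together with $p > n$ force the exponents $n/(2p)$ and $n/(4p) + 3/4$ to be strictly less than $1$, so that $|\mathrm{Rm}(g'_t)| \le C/t$ on $(0, \tau_1]$. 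This verifies the hypothesis of Lemma~\ref{lemm-harmonic}, which I apply to the constant sequence $f_i \equiv f$ (trivially satisfying assumptions $(1)$ and $(2)$ of Main Theorem~\ref{theo-2}), producing a smooth solution $(F_t)_{t \in (0, \tau_2]}$ of $\partial_t F_t = \Delta_{g'_t} F_t$ with $\sup_M |F_t - f| \le C_0 \sqrt{t}$.

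Set $\tau := \min\{\tau_1, \tau_2\}$ (shrinking later if needed). For $t > 0$, smoothness of the pair $(g'_t, F_t)$ makes $I$ continuous on $(0, \tau]$. For right-continuity at $t = 0$, I would combine three facts: $\tilde g'_t \to g'$ in $C^2(M)$ as $t \to 0$ by clause $(b)$ of Lemma~\ref{lemm-basic}; $\Phi_t \to \mathrm{id}_M$ in $C^1$ as $t \to 0$, which follows from the defining equation (\ref{RDD}) and the uniform bound on the DeTurck vector field $X_g(\tilde g'_t)$ on any time interval bounded away from $0$; and $F_t \to f$ uniformly by Lemma~\ref{lemm-harmonic}$(B)$. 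These together yield $g'_t \to g'$ in $C^2$, and hence $R(g'_t) \to R(g')$ and $d\mathrm{vol}_{g'_t} \to d\mathrm{vol}_{g'}$ uniformly, while the weight $e^{-F_t} \to e^{-f}$ uniformly; dominated convergence then gives $I(t) \to I(0)$ as $t \to 0^+$.

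The main obstacle I foresee is the $C^2$-continuity of $g'_t$ at $t = 0$, since the a~priori bounds $(d')$ of Lemma~\ref{lemm-basic} blow up there; this subtlety is resolved by clause $(b)$ of the same lemma, which leverages the $C^2$-regularity of $g'$ itself rather than only the $W^{1,p}$-closeness to $g$. Note that the hypothesis $R(g') \ge 0$ is not directly used in the continuity argument above, although the maximum principle applied to the Ricci flow evolution $\partial_t R = \Delta R + 2|\mathrm{Ric}|^2$ shows that $R(g'_t) \ge 0$ is preserved on $[0, \tau]$, which will be needed when invoking this lemma in the proofs of the Main Theorems.
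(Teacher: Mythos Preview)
Your continuity argument does establish that $I(t)\to I(0)$ as $t\to 0^+$, but it does \emph{not} prove the lemma as stated, because the $\tau$ you produce depends on the wrong data. The statement requires $\tau=\tau\bigl(M,g,|g-g'|_{W^{1,p}(M,g)},\delta,\Lambda\bigr)$; in particular $\tau$ must depend on $g'$ only through its $W^{1,p}$-distance to $g$. Your argument hinges on clause $(b)$ of Lemma~\ref{lemm-basic}, i.e.\ on the $C^{2}$-continuity of $t\mapsto \tilde g'_t$ at $t=0$, and the modulus of that continuity is controlled by the full $C^{2}$ data of the initial metric $g'$ (indeed, the proof of $(b)$ in Lemma~\ref{lemm-1} explicitly uses the continuity of the second derivatives of the initial metric). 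Hence the smallness threshold you obtain for $\tau$ depends on $\|g'\|_{C^2}$, not merely on $|g-g'|_{W^{1,p}}$. This is fatal for the intended application: in the proofs of the Main Theorems the lemma is invoked for every $g_i$ in the sequence, and one needs a \emph{single} $\tau$ valid for all $i$. The $C^{2}$ norms of the $g_i$ are not assumed bounded, so your $\tau_i$'s could shrink to $0$.

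The paper's proof avoids this by a quantitative differential inequality rather than soft continuity. One computes $\frac{d}{dt}I(t)$ along the Ricci flow/heat flow pair, uses $R(g'_t)\ge 0$ (from the maximum principle) together with the pointwise bound $R(g'_t)\le Ct^{-(\frac{n}{4p}+\frac{3}{4})}$ coming from Lemma~\ref{lemm-basic}$(d')$ and the derivative estimates for $f_t$ from Lemma~\ref{lemm-harmonic}, to obtain
\[
\frac{d}{dt}I(t)\;\ge\;-\,C\,t^{-(\frac{n}{4p}+\frac{3}{4})}\,I(t),
\]
with $C=C(M,g,|g-g'|_{W^{1,p}},\Lambda)$. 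Integrating this (the exponent is $<1$ since $p>n$) gives $I(t)\ge v(t)\,I(0)$ for an explicit $v(t)\to 1$ depending only on $C$, from which the desired $\tau$ with the correct dependence follows. Note in particular that the hypothesis $R(g')\ge 0$ is used essentially here (to turn $R^2$ into $\|R\|_\infty\cdot R$ and keep the sign), whereas in your argument it played no role---another sign that the approaches are genuinely different.
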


\begin{proof}
From Lemmas \ref{lemm-basic} and \ref{lemm-harmonic}, there is a sufficiently small $1 > \tau' > 0$ such that there is a Ricci flow $(g'(t))_{t \in [0, \tau']}$ starting at $g'$ and
there is a heat flow $(f_{t})_{t \in (0, \tau']}$ with $f_{t} \rightarrow f$ as $t \searrow 0$ uniformly. 
Along these $(g'_{t})$ and $(f_{t}),$ for all $t \in (0, \tau'],$
\[
\begin{split}
&\frac{d}{d t} \left( \int_{M} R^{a,b,c}_{f_{t}}(g'_{t})\, e^{-f_{t}} d\mathrm{vol}_{g'_{t}} \right) \\
&\overset{(1)}{=} \int_{M} \left( \Delta_{g'_{t}} R(g'_{t}) + 2 |\mathrm{Ric}(g'_{t})|^{2}_{g'_{t}} - R(g'_{t})^{2} +2a\, g(\nabla \Delta f_{t}, \nabla f_{t}) -2a\, \mathrm{Ric}(\nabla f_{t}, \nabla f_{t}) \right)\, e^{-f_{t}} d\mathrm{vol}_{g'_{t}} \\
&~~~~~+b c \int_{M} (\Delta_{g'_{t}} f_{t}) e^{c-f_{t}}\, d\mathrm{vol}_{g'_{t}} + \int_{M} \left( \frac{\partial}{\partial t} e^{-f_{t}} \right) R_{f_{t}}(g'_{t})\, d\mathrm{vol}_{g'_{t}} \\
&\overset{(2)}{=} \int_{M} \left( 2|\mathrm{Ric}(g'_{t})|^{2}_{g'_{t}} -R(g'_{t})^{2} \right)\, e^{-f_{t}} d\mathrm{vol}_{g'_{t}} -2 \int_{M} \left( \Delta_{g'_{t}} f_{t} - |\nabla f_{t}|^{2} \right) R(g'_{t})\, e^{-f_{t}} d \mathrm{vol}_{g'_{t}} \\
&~~~~~+b c \int_{M} (\Delta_{g'_{t}} f_{t}) e^{c-f_{t}}\, d\mathrm{vol}_{g'_{t}} + a \int_{M} \left( \Delta |\nabla f_{t}|^{2} -2 |\nabla^{2} f_{t}|^{2} -4\mathrm
{Ric}(\nabla f_{t}, \nabla f_{t})\right)e^{-f_{t}}\, d\mathrm{vol}_{g'_{t}} \\
&\overset{(3)}{\ge} \int_{M} \left( \frac{2}{n}R(g'_{t})^{2} -R(g'_{t})^{2} \right)\, e^{-f_{t}} d\mathrm{vol}_{g'_{t}} -2 \int_{M} \left( \Delta_{g'_{t}} f_{t} - |\nabla f_{t}|^{2} \right) R(g'_{t})\, e^{-f_{t}} d \mathrm{vol}_{g'_{t}} \\
&~~~~~+ a \int_{M} \left( |\nabla f_{t}|^{2} \left( -\Delta f_{t} + |\nabla f_{t}|^{2} \right) -2|\nabla^{2} f_{t}|^{2} -4\mathrm{Ric}(\nabla f_{t}, \nabla f_{t}) \right)e^{-f_{t}}\, d\mathrm{vol}_{g'_{t}} \\
&~~~~~+b c \int_{M} (\Delta_{g'_{t}} f_{t}) e^{c-f_{t}}\, d\mathrm{vol}_{g'_{t}} \\
&\overset{(4)}{\ge} \left( \frac{2}{n} - 1 \right) \int_{M} R(g'_{t})^{2}\, e^{-f_{t}} d\mathrm{vol}_{g'_{t}} -2 \int_{M} \left( \Delta_{g'_{t}} f_{t} - |\nabla f_{t}|^{2} \right) R(g'_{t})\, e^{-f_{t}} d\mathrm{vol}_{g'_{t}} \\
&~~~~~-C_{0}t^{-\frac{n}{2p} + \frac{1}{2}} \int_{M} |\nabla f_{t}|^{2} e^{-f_{t}}\, d\mathrm{vol}_{g'_{t}}  -C_{1} t^{-\left( \frac{n}{2p} - \frac{1}{2} \right)} t^{-\frac{1}{2} -\frac{n}{4p} + \frac{1}{4}} \\
&~~~~~- C_{2} t^{-1 -\frac{n}{2p} + \frac{1}{2}} - C_{3} t^{-\left( \frac{n}{4p} + \frac{3}{4} \right)} t^{-\left( \frac{n}{2p} - \frac{1}{2} \right)} - C_{4} t^{-\frac{1}{2} -\frac{n}{4p} + \frac{1}{4}} \\
&\overset{(5)}{\ge} -C_{5} t^{-\frac{n}{4p} - \frac{3}{4}} \int_{M} R_{f_{t}}(g'_{t})\, e^{-f_{t}}d\mathrm{vol}_{g'_{t}} - C_{6} t^{-\frac{3n}{4p} - \frac{1}{4}}.
\end{split}
\]
Here, $C_{0}, \cdots, C_{6}$ are positive constants depending on $M, \Lambda$ and $|g -g'|_{W^{1, p}(M,g)}$.
And, each equality/inequality is derived as follows.
\begin{itemize}
    \item $(1)$ follows from the evolution of the scalar curvature and the volume form under the Ricci flow, and $\partial _{t} f_{t} = \Delta_{g_{t}} f_{t}~(t \in (0, \tau'])$.
    \item $(2)$ follows from applying the divergence formula to the term 
    \[
    \int_{M} \Delta_{g'(t)} R(g'(t))\, e^{-f_{t}} d\mathrm{vol}_{g'(t)},
    \]
    and Bochner formula to the term 
    \[
    g(\nabla \Delta f_{t}, \nabla f_{t}).
    \]
    \item $(3)$ follows from the Cauchy-Schwarz inequality $n |\mathrm{Ric}(g'_{t})|^{2} \ge R(g'_{t})^{2}$, and applying the divergence formula to the term
    \[
    \int_{M} \Delta |\nabla f_{t}|^{2} e^{-f_{t}}\, d\mathrm{vol}_{g'_{t}}.
    \]
    \item $(4)$ follows from the application of the derivative estimates (\ref{eq-heat-derivative}) in Lemma \ref{lemm-harmonic} to the third integral term.
    \item $(5)$ have been obtained as follows. From the assumption $R(g') \ge 0,$ by the maximum principle under the Ricci flow, we have $R(g'_{t}) \ge 0.$
Since the scalar curvature is invariant under the pullback action by a diffeomorphism, from the $C^{1}$-closedness assumption, one can apply the same derivative estimate $(d')$ in Lemma \ref{lemm-basic} to one $R(g'_{t})$ in the integrand of the left-hand side of the inequality.
Moreover, applying the derivative estimate (\ref{eq-heat-derivative}) in Lemma \ref{lemm-harmonic} to the second integrand of the left-hand side of the last inequality, we obtain the desired estimate.
\end{itemize}
Integrating the above inequality and using the continuity of $t \mapsto \int_{M} R_{f_{t}}(g'_{t}) e^{-f_{t}}\, d\mathrm{vol}_{g'_{t}}$ (especially at $t = 0$), we have
\[
\begin{split}
\int_{M} R_{f_{t}}(g'_{t}) e^{-f_{t}}\, d\mathrm{vol}_{g'_{t}} &\ge \int_{M} R_{f_{0}}(g'_{0}) e^{-f_{0}}\, d\mathrm{vol}_{g'_{0}} - C_{6} t^{\frac{3}{4} - \frac{3n}{4p}} \\
&-C_{5} \int^{t}_{0} s^{-\frac{n}{4p} - \frac{3}{4}} \left( \int_{M} R_{f_{s}}(g'_{s})e^{-f_{s}}\, d\mathrm{vol}_{g'_{s}} \right)\, ds.
\end{split}
\]
Hence, applying Lemma \ref{lemm-gronwall} (in Appendix \ref{section-appendix}) with 
\[
\begin{split}
    u(t) &:= \int_{M} R_{f_{t}}(g'_{t}) e^{-f_{t}}\, d\mathrm{vol}_{g'_{t}}, \\
    \alpha(t) &:= - C_{6} t^{\frac{3}{4} - \frac{3n}{4p}},~~\mathrm{and}\\
    \beta(t) &:= C_{5} t^{-\frac{n}{4p} -\frac{3}{4}},
\end{split}
\]
we get
\[
\begin{split}
\int_{M} R_{f_{t}}(g'_{t}) e^{-f_{t}}\, d\mathrm{vol}_{g'_{t}} &\ge \left( C_{5} t^{\frac{1}{4} - \frac{n}{4p}} \right)^{-1} \int_{M} R_{f_{0}}(g'_{0}) e^{-f_{0}}\, d\mathrm{vol}_{g'_{0}} \\
&~~~~~-C_{6}\left( C_{5} t^{\frac{1}{4} - \frac{n}{4p}} \right)^{-1} \int^{t}_{0} \exp \left( C_{5} s^{\frac{1}{4} - \frac{n}{4p}} \right)\, s^{\frac{3}{4} - \frac{3n}{4p}}\, ds.
\end{split}
\]
Therefore, if we take sufficiently small $0 < \tau << \tau',$ the desired assertion holds for such a constant $\tau > 0$. 
\end{proof}

\section{Proof of Main Theorems}
\label{section-proof}
In this section, we prove the main theorems.
First, we prove Main Theorem \ref{theo-2} because we will use almost the same argument in the proof of another main theorem \ref{theo-1}.
The idea of proof here is the same as that of Bamler \cite{bamler2016proof}, by contradiction.
To do this, we suppose that the weighted total scalar curvature of the limiting metric is less than or equal to $\kappa - \delta$ for some $\delta > 0$.
On the other hand, combining Lemma \ref{lemm-basic}, \ref{lemm-sub}, \ref{lemm-harmonic} and \ref{lemm-key}, we can deduce that the weighted total scalar curvature of the limiting metric is greater than or equal to $\kappa - \delta/2.$
This contradicts our supposition.
We prescribe these arguments with more precision below.
\begin{proof}[Proof of Main Theorem \ref{theo-2}]
We show the assertion by contradiction.
Suppose that 
\[
\int_{M} R^{a,b,c}_{f}(g)\, e^{-f} d\mathrm{vol}_{g} < \kappa.
\]
Then there is a positive constant $\delta > 0$ such that
\[
\int_{M} R^{a,b,c}_{f}(g)\, e^{-f} d\mathrm{vol}_{g} \le \kappa - \delta  < \kappa.
\]
On the other hand, since $\int_{M} R^{a,b,c}_{f_{i}}(g_{i})\, e^{-f_{i}}\, d\mathrm{vol}_{g_{i}} \ge \kappa$, from Lemma \ref{lemm-basic}, Lemma \ref{lemm-key} and Lemma \ref{lemm-harmonic}, there are a $\tau = \tau (\delta),$
a Ricci flow $(\tilde{g}_{i,t})_{t \in [0, \tau]}$ and a heat flow $(f_{i, t})_{t \in (0, \tau]}$ such that
\[
\int_{M} R^{a,b,c}_{f_{i,t}}(\tilde{g}_{i, t})\, e^{-f_{i, t}} d\mathrm{vol}_{\tilde{g}_{i,t}} \ge \kappa - \frac{1}{2} \delta
\]
and there also exists a heat flow $(f_{t})_{t \in (0, \tau]}$ with $f_{t} \rightarrow f$ as $t \searrow 0$ uniformly on $M.$ 
By Lemma \ref{lemm-sub}, as $i \rightarrow \infty,$ we have a solution of the Ricci--DeTurck flow equation $(g_{t})_{t \in [0,\tau]}$ (retaking a sufficiently small $\tau$ if necessary) starting at $g$ and a time-dependent $C^{1}$-diffeomorphism $(\Phi_{t})_{t \in [0, \tau]}$ with $\Phi_{0} = \mathrm{id}_{M}$
such that 
\[
\int_{M} R^{a,b,c}_{f_{t}}(\Phi_{t}^{*} g_{t})\, e^{-f_{t}} d\mathrm{vol}_{\Phi_{t}^{*} g_{t}} =  \int_{M} R^{a,b,c}_{f_{t} \circ \Phi^{-1}_{t}}(g_{t})\, e^{-f_{t} \circ \Phi^{-1}_{t}} d\mathrm{vol}_{g_{t}}
\ge \kappa - \frac{1}{2} \delta
\]
for all $t \in (0,\tau].$
On the other hand, from the derivative estimates (\ref{eq-heat-derivative}) ($l = 1, 2$ respectively) in Lemma \ref{lemm-harmonic}, we have $f_{t} \rightarrow f$ uniformly and $\nabla f_{t} \rightarrow \nabla f~\mathrm{a.e.}$ as $t \rightarrow 0$.
Furthermore, since $\Phi_{0} = \mathrm{id}_{M}$ (hence $\mathrm{d}\Phi_{0} = \mathrm{id}_{TM}$ as well) and $R_{g_{t}} \rightarrow R_{g}~(t \rightarrow 0)$ by Lemma \ref{lemm-basic} $(b)$, we have
\[
\begin{split}
\int_{M} R^{a,b,c}_{f}(g)\, e^{-f} d\mathrm{vol}_{g} &= \int_{M} (R_{g} + a|\nabla f|^{2} +b e^{cf}) e^{-f}\, d\mathrm{vol}_{g} \\
&= \lim_{t \rightarrow 0} \int_{M} R^{a,b,c}_{f_{t} \circ \Phi^{-1}_{t}}(g_{t})\, e^{-f_{t} \circ \Phi^{-1}_{t}} d\mathrm{vol}_{g_{t}} \\
&\ge \kappa - \frac{1}{2} \delta > \kappa - \delta.
\end{split}
\]
This contradicts our supposition
\[
\int_{M} R^{a,b,c}_{f}(g)\, e^{-f} d\mathrm{vol}_{g} \le \kappa -\delta
\]
and concludes the proof.
\end{proof}

Next, we give a proof of Main Theorem \ref{theo-1}.
\begin{proof}[Proof of Main Theorem \ref{theo-1}]
The proof is similar to the one of Main Theorem \ref{theo-2}.
However, since each metric is Ricci soliton, the Ricci flow starting from such a metric is homothetic.
Using this fact, we can prove a similar statement to Theorem \ref{theo-2} under the assumption of lower $C^{0}$ regularity.
Indeed, the solution $(\tilde{g}_{i,t})_{t \in [0, \tau)}$ of the Ricci flow equation starting from $g_{i}$ constructed in Lemma \ref{lemm-1} is
\[
\tilde{g}_{i, t} = (1 -2\lambda_{i}t) \Phi^{*}_{i, t} (g_{i})~~~~t \in [0, \tau),
\]
where
$\Phi_{i, t}$ is the family of diffeomorphisms generated by the time-dependent vector field $X_{i}(t) := (1-2\lambda_{i} t)^{-1} Y_{i}$ with the initial condition $\Phi_{i, 0} = \mathrm{id}_{M},$ and $\tau > 0$ is the uniform existence time of the flows guaranteed in Lemma \ref{lemm-1}.
Thus, we have
\begin{equation}\label{eq-soliton}
  \int_{M} R(\tilde{g}_{i, t})\, d\mathrm{vol}_{\tilde{g}_{i, t}} = (1 -2 \lambda_{i} t)^{n/2 - 1} \int_{M} R(g_{i})\, d\mathrm{vol}_{g_{i}}.
\end{equation}
for all $t \in [0, \tau).$
We will first consider the case of $\kappa \ge 0.$
From (\ref{eq-soliton}), for any $\delta > 0,$ there is a positive time $\tau = \tau(\delta, C_{+}) > 0$ such that 
for any diffeomorphism $\Phi: M \rightarrow M$ and $t \in [0, \tau],$
\[
\int_{M} R(\Phi^{*} \tilde{g}_{i, t})\, d\mathrm{vol}_{\Phi^{*} g_{i, t}} = \int_{M} R(\tilde{g}_{i, t})\, d\mathrm{vol}_{\tilde{g}_{i, t}}
\ge \kappa - \delta.
\]
Similarly, when $\kappa < 0,$ we see that for any $\delta > 0,$ there is a positive time $\tau = \tau(\delta, C_{-}) > 0$ such that 
for any diffeomorphism $\Phi: M \rightarrow M$ and $t \in [0, \tau],$
\[
\int_{M} R(\Phi^{*} \tilde{g}_{i, t})\, d\mathrm{vol}_{\Phi^{*} \tilde{g}_{i, t}} = \int_{M} R(\tilde{g}_{i, t})\, d\mathrm{vol}_{\tilde{g}_{i, t}}
\ge \kappa - \delta.
\]
In particular, we take the inverse of the Ricci-DeTurck diffeomorphism (cf. Remark \ref{rema-lemm-1}) of $\tilde{g}_{i, t}$ as $\Phi$ here, then we have
\[
\int_{M} R(g_{i, t})\, d\mathrm{vol}_{g_{i, t}} \ge \kappa - \delta~~~\mathrm{for~all}~t \in [0, \tau],
\]
where $g_{i, t}$ is the Ricci-DeTurck flow starting at $g_{i}.$
Hence, we can use this claim instead of Lemma \ref{lemm-key} in the proof of Main Theorem \ref{theo-2}.
Therefore, using Lemma \ref{lemm-1} instead of Lemma \ref{lemm-basic}, we can prove the assertion in the same way as in the proof of Main Theorem \ref{theo-2}.
\end{proof}

\begin{rema}
If $g_{i}$ is a shrinking Ricci soliton (i.e., $\lambda_{i} > 0$ in the above situation),
then the maximal existence time of the corresponding Ricci flow is $(2 \lambda_{i})^{-1}.$
But, since we assume $g_{i}$ converges to $g$ in the $C^{0}$-sense, Lemma \ref{lemm-1} implicitly prevents $\lambda_{i} \rightarrow \infty$ as $i \rightarrow \infty.$
\end{rema}

\section{Some corollaries}
\label{section-coro}
We present a corollary of Fact \ref{theo-3} here.
In order to do this, we need to recall the definition of the Yamabe constant:
\begin{defi}[Yamabe constant]
The \textit{Yamabe constant} $Y(M, g)$ of a closed Riemannian manifold $(M, g)$ is defined as
\[
Y(M, g) := \inf \left\{ \int_{M} R(\tilde{g})\, d\mathrm{vol}_{\tilde{g}}~\middle|~\tilde{g} \in [g]~\mathrm{and}~\mathrm{Vol}(M,\tilde{g}) = 1 \right\},
\]
where $[g] := \left\{ \tilde{g} = u^{\frac{4}{n-2}} g~|~u \in C^{\infty}(M),~u > 0~\mathrm{on}~M \right\}$ is the conformal class of the metric $g.$
By the definition, $Y(M, g)$ depends only on the conformal class $[g]$ of $g.$
A Riemannian metric $\tilde{g} \in [g]$ with $\mathrm{Vol}(M, \tilde{g}) = 1$ is called \textit{Yamabe metric} of $[g]$ if 
\[
Y(M, g) = \int_{M} R(\tilde{g})\, d\mathrm{vol}_{\tilde{g}}.
\]
\end{defi}
\begin{coro}[of Fact \ref{theo-3}]
Let $M^{n}$ be a closed $n$-manifold and $g$ a $C^{2}$-Riemannian metric on $M.$
Let $(g_{i})$ be a sequence of $C^{2}$-Riemannian metrics on $M$ that 
converges to $g$ on $M$ in the $C^{0} \cap W^{1, 2}$-sense, and $\mathrm{Vol}(M, g_{i}) = 1.$
Assume that $g$ is a Yamabe metric of $[g]$ and there are a constant $\kappa \in \mathbb{R}$ such that 
$Y(M, g_{i}) \ge \kappa$.
Then $Y(M, g) \ge \kappa.$
\end{coro}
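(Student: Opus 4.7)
The plan is to reduce this statement directly to Corollary \ref{coro-2.1}. The key observation is that because $\mathrm{Vol}(M, g_i) = 1$, the metric $g_i$ itself is an admissible competitor in the variational problem defining $Y(M, g_i)$, so
\[
\int_M R(g_i)\, d\mathrm{vol}_{g_i} \ge Y(M, g_i) \ge \kappa
\]
for every $i$. This is exactly the lower bound on the total scalar curvatures required to invoke Corollary \ref{coro-2.1}.

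Next, I would establish the volume comparison hypothesis. Since $p > n$, the Morrey embedding $W^{1,p} \hookrightarrow C^0$ implies that $g_i \to g$ uniformly as sections of $S^2 T^* M$. Uniform convergence of the metric coefficients gives uniform convergence of $\sqrt{\det g_i}$ in any local chart, hence $\mathrm{Vol}(M, g) = \lim_{i \to \infty} \mathrm{Vol}(M, g_i) = 1$. In particular $\mathrm{Vol}(M, g_i) = \mathrm{Vol}(M, g)$ for all $i$, so the volume hypothesis of Corollary \ref{coro-2.1} holds trivially.

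With all hypotheses verified ($W^{1,p}$-convergence, the total scalar curvature lower bound derived above, $R(g_i) \ge \sigma$ from the assumption, and the volume inequality), Corollary \ref{coro-2.1} yields
\[
\int_M R(g)\, d\mathrm{vol}_g \ge \kappa.
\]
Finally, since $g$ is a Yamabe metric of $[g]$ and $\mathrm{Vol}(M, g) = 1$, the defining equality gives $Y(M, g) = \int_M R(g)\, d\mathrm{vol}_g$, and combining the two inequalities yields $Y(M, g) \ge \kappa$, as desired. There is no genuine technical obstacle in this argument beyond recognizing the reduction; all the analytic work has already been carried out in Corollary \ref{coro-2.1} and, via it, in Main Theorem \ref{theo-3}. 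The Yamabe hypothesis on $g$ is used only at the very last step, to convert the total scalar curvature inequality into the Yamabe constant inequality.
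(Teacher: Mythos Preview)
Your proof is correct and follows essentially the same approach as the paper: both use $\mathrm{Vol}(M,g_i)=1$ to get $\int_M R(g_i)\,d\mathrm{vol}_{g_i}\ge Y(M,g_i)\ge\kappa$, deduce $\mathrm{Vol}(M,g)=1$ from the convergence, apply Corollary~\ref{coro-2.1}, and then use that $g$ is a Yamabe metric to conclude. Your write-up is slightly more explicit about the Morrey embedding step justifying volume convergence, but the argument is the same.
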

\begin{proof}
From the definition of $Y(M, g_{i})$ and $\mathrm{Vol}(M, g_{i}) = 1,$ we have 
\[
\int_{M} R(g_{i})\, d\mathrm{vol}_{g_{i}} \ge \kappa.
\]
Since $g_{i} \rightarrow g$ in the $C^{0}$-sense and $\mathrm{Vol}(M, g_{i}) = 1,$
we also have $\mathrm{Vol}(M, g) = 1.$
Hence, from Fact \ref{theo-3}, we have
\[
\int_{M} R(g)\, d\mathrm{vol}_{g} \ge \kappa.
\]
Therefore, since $g$ is a Yamabe metric of $[g]$ and $\mathrm{Vol}(M, g) = 1,$ we obtain
\[
Y(M, g) = \int_{M} R(g)\, d\mathrm{vol}_{g} \ge \kappa.
\]
\end{proof}
Up to this point, we have only dealt with case where the underlying manifold is closed.
In a special situation, we can prove a similar statement as Fact \ref{theo-3} for a noncompact manifold.
\begin{prop}[Conformal deformations on an open manifold]
\label{theo-conformal}
Let $(M^{n}, g)$ be a non-compact Riemannian $n$-manifold ($n \ge 3$) with $\int_{M} R(g)\, d\mathrm{vol}_{g} < +\infty$ and $u_{i} : M \rightarrow \mathbb{R}$ a sequence of positive $C^{2}$-functions.
Assume that each $u_{i}$ is equal to 1 outside a compact set and
\[
u_{i} \rightarrow 
1~\mathrm{uniformly}~C^{1}~\mathrm{sense~on}~M.
\]
Set 
$g_{i} :=
u_{i}^{\frac{4}{n-2}} g.$
Then, it holds that
\[
\int_{M} R(g_{i})\, d\mathrm{vol}_{g_{i}} \overset{i \rightarrow \infty}{\longrightarrow}  \int_{M} R(g)\, d\mathrm{vol}_{g}.
\]
\end{prop}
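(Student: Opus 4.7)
The plan is to reduce the assertion to an elementary integration by parts via the Yamabe-form conformal transformation of the scalar curvature. Writing $g_i = u_i^{4/(n-2)} g$, I would combine the standard identity
\[
R(g_i) = u_i^{-\frac{n+2}{n-2}}\left( -\frac{4(n-1)}{n-2}\, \Delta_g u_i + R(g)\, u_i \right)
\]
(which follows from the conformal change formula stated in the paper by setting $\phi = \frac{2}{n-2}\log u_i$) with the conformal change of volume $d\mathrm{vol}_{g_i} = u_i^{2n/(n-2)}\, d\mathrm{vol}_g$. The exponents $-\tfrac{n+2}{n-2}+\tfrac{2n}{n-2}=1$ telescope, giving the clean identity
\[
R(g_i)\, d\mathrm{vol}_{g_i} = \left( u_i^2\, R(g) - \frac{4(n-1)}{n-2}\, u_i\, \Delta_g u_i \right) d\mathrm{vol}_g.
\]

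Setting $v_i := u_i - 1$, which is $C^2$ with compact support in some compact set $K_i \subset M$, the density $R(g_i)\, d\mathrm{vol}_{g_i} - R(g)\, d\mathrm{vol}_g$ is compactly supported, so the difference of the two total scalar curvatures is well defined independently of global integrability issues. The compact support of $v_i$ on the open manifold $M$ makes the boundary terms in Green's identities vanish, yielding $\int_M \Delta_g v_i\, d\mathrm{vol}_g = 0$ and $\int_M v_i\, \Delta_g v_i\, d\mathrm{vol}_g = -\int_M |\nabla_g v_i|^2\, d\mathrm{vol}_g$. Writing $u_i = 1 + v_i$, these combine to
\[
\int_M u_i\, \Delta_g u_i\, d\mathrm{vol}_g = -\int_M |\nabla_g u_i|^2\, d\mathrm{vol}_g,
\]
so the difference of the total scalar curvatures takes the form
\[
\int_M R(g_i)\, d\mathrm{vol}_{g_i} - \int_M R(g)\, d\mathrm{vol}_g = \frac{4(n-1)}{n-2}\int_M |\nabla_g u_i|^2\, d\mathrm{vol}_g + \int_M R(g)(u_i^2 - 1)\, d\mathrm{vol}_g.
\]

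The final step is to send each term on the right to zero. Both integrands are supported in $K_i$, and the hypothesis $u_i \to 1$ uniformly in $C^1$ forces $\sup_M |\nabla_g u_i|^2 \to 0$ and $\sup_M |u_i^2 - 1| \to 0$. Provided the supports $K_i$ lie in a single fixed compact set $K \subset M$, each integral is bounded by the product of a vanishing sup-norm with $\mathrm{vol}_g(K)$ (or with $\max_K |R(g)|\cdot\mathrm{vol}_g(K)$ for the second), hence both vanish as $i \to \infty$. The step I expect to be the principal obstacle is precisely this uniform control on the supports $K_i$: without a common compactum, a sequence of shallow but widely spread bumps could keep $\int_M |\nabla_g u_i|^2\, d\mathrm{vol}_g$ bounded away from zero, so the proof should either interpret the hypothesis as supplying a single compactum containing all supports (the natural reading when combined with uniform $C^1$-convergence on an open manifold) or supplement it with uniform $W^{1,2}$-control of $u_i-1$ to justify a dominated convergence argument.
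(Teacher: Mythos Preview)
Your approach is essentially identical to the paper's: both use the conformal change formulas for scalar curvature and volume, cancel the exponents to get $R(g_i)\,d\mathrm{vol}_{g_i} = (u_i^2 R(g) - \tfrac{4(n-1)}{n-2} u_i\Delta_g u_i)\,d\mathrm{vol}_g$, integrate by parts using the compact support of $u_i-1$, and conclude from uniform $C^1$-convergence. Your introduction of $v_i = u_i - 1$ to make the boundary-term vanishing explicit is a minor cosmetic variant of the paper's direct appeal to the divergence formula.

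You are in fact more careful than the paper on one point: the paper simply asserts $\int_M |\nabla_g u_i|^2\,d\mathrm{vol}_g \to 0$ and $\int_M u_i^2 R(g)\,d\mathrm{vol}_g \to \int_M R(g)\,d\mathrm{vol}_g$ from the uniform $C^1$-convergence, without addressing whether the supports $K_i$ lie in a common compactum. You correctly flag this as the principal obstacle; the paper's proof implicitly relies on the same assumption but does not comment on it.
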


\begin{proof}
From the formula for the scalar curvature and the volume form under this conformal change:
\[
R(g_{i}) = -4 \frac{n-1}{n-2} u_{i}^{- \frac{n+2}{n-2}} \Delta_{g} u_{i} + u_{i}^{-\frac{4}{n-2}} R(g),~~~~\mathrm{vol}_{g_{i}} = u_{i}^{\frac{2n}{n-2}} \mathrm{vol}_{g},
\]
we have
\[
\begin{split}
\int_{M} R(g_{i})\, d\mathrm{vol}_{g_{i}}
&= -4 \frac{n-1}{n-2} \int_{M} u_{i}^{- \frac{n+2}{n-2}} \Delta_{g} u_{i} \left( u_{i}^{\frac{2n}{n-2}}\, d\mathrm{vol}_{g} \right)
+ \int_{M} u_{i}^{\frac{2n-4}{n-2}} R(g)\, d\mathrm{vol}_{g} \\
&= -4 \frac{n-1}{n-2} \int_{M} u_{i} \Delta_{g} u_{i}\, d\mathrm{vol}_{g} + \int_{M} u_{i}^{\frac{2n-4}{n-2}} R(g)\, d\mathrm{vol}_{g} \\
&= 4 \frac{n-1}{n-2} \int_{M} |\nabla_{g} u_{i}|_{g}^{2} d\mathrm{vol}_{g} + \int_{M} u_{i}^{\frac{2n-4}{n-2}} R(g)\, d\mathrm{vol}_{g}.
\end{split}
\]
We have used the divergence formula and the fact that $u_{i}$ is equal to 1 outside a compact set in the third equality.
Since $u_{i} \overset{i \rightarrow \infty}{\longrightarrow} 1$ in the $C^{1}$-uniformly sense on $M,$ we have
\[
\int_{M} |\nabla_{g} u_{i}|_{g}^{2} d\mathrm{vol}_{g} \rightarrow 0~~\mathrm{as}~i \rightarrow \infty
\]
and
\[
\int_{M} u_{i}^{\frac{2n-4}{n-2}} R(g)\, d\mathrm{vol}_{g} \rightarrow \int_{M} R(g)\, d\mathrm{vol}_{g}~~\mathrm{as}~i \rightarrow \infty
\]
Therefore we obtain the desired assertion from the above equality.
\end{proof}

Lee and LeFloch \cite{lee2015positive} defined a notion of distributional scalar curvature for smooth manifolds that have a metric tensor that has only certain lower regularity.
\begin{defi}[Distributional scalar curvature ({\cite[Definition 2.1]{lee2015positive}}, {\cite[Section 2]{jiang2023weak}})]
\label{defi-distr}
Let $M$ be a smooth manifold endowed with a smooth background metric $h.$
Given any Riemannian metric $g$ with $L^{\infty}_{loc}(M) \cap W^{1, 2}_{loc}(M)$ regularity and locally bounded inverse $g^{-1} \in L^{\infty}_{loc}(M),$ the \textit{scalar curvature distribution} $R_{g}$ is defined for every compactly supported smooth test function $u : M \rightarrow \mathbb{R}$ by
\begin{equation}\label{eq-distr-def}
\langle R_{g}, u \rangle := \int_{M} \left( -V \cdot \overline{\nabla}\left( u \frac{d\mathrm{vol}_{g}}{d\mathrm{vol}_{h}} \right) + F u \frac{d\mathrm{vol}_{g}}{d\mathrm{vol}_{h}} \right)\, d\mathrm{vol}_{h},
\end{equation}
where $V = (V^{k}) \in \Gamma(M)$ is given by $V^{k} := g^{ij} \Gamma^{k}_{ij} - g^{ik}\Gamma^{j}_{ji},$ $F$ is a function\footnote{The expression of $F$ in \cite{lee2015positive} was a little incorrect.} as 
\[
F := \mathrm{tr}_{g}\mathrm{Ric}_{h} - \overline{\nabla}_{k} g^{ij}\Gamma^{k}_{ij} + \overline{\nabla}_{k} g^{ik} \Gamma^{j}_{ji} + g^{ij} \left( \Gamma^{k}_{kl} \Gamma^{l}_{ij} - \Gamma^{k}_{jl} \Gamma^{l}_{ik} \right)
\]
and $\Gamma^{k}_{ij} := \frac{1}{2} g^{kl} \left( \overline{\nabla}_{i} g_{jl} + \overline{
\nabla}_{j} g_{il} - \overline{\nabla}_{l} g_{ij} \right).$
Here, $\overline{\nabla}$ denotes the Levi-Civita connection of $h.$

Let $\kappa$ be a continuous function on $M.$ We say that $R_{g} \ge \kappa$ \textit{in the distributional sense} if $\langle R_{g}, u \rangle - \int_{M} \kappa u\, d\mathrm{vol}_{g} \ge 0$ for any nonnegative compactly supported test function $u \in C^{\infty}_{+}(M) \cap C^{\infty}_{0}(M).$
\end{defi}
\begin{rema}
  \label{rema-distr-smooth}
  If a metric $g$ is $C^{2},$ then the scalar curvature distribution $\langle R_{g}, u \rangle$ coincides with $\int_{M} R(g) u\, d\mathrm{vol}_{g}.$
\end{rema}
For more details about the distributional scalar curvature and related results, see \cite{jiang2023weak, lee2015positive, sormani2023extreme, tian2023compactness}.
From Gromov's $C^{0}$-limit theorem (Theorem \ref{gromov-limit}), there has already been a definition of scalar curvature lower bounds for $C^{0}$ metrics (see {\cite[Definition 1.2]{lee2022rigidity}} for example).
Namely, a $C^{0}$ metric $g$ on a smooth manifold $M$ is of $R(g) \ge \kappa$ on $M$ in the Gromov's sense if and only if there exists a sequence of $C^{2}$ metrics $(g_{i})$ such that $g_{i}$ converge $C^{0}$-locally to $g$ and satisfy $R(g_{i}) \ge \kappa$ on $M.$
Note that Burkhardt-Guim \cite{burkhardt2019pointwise} pointed out that her definition (via the Ricci--DeTurck flow) and this Gromov's definition are actually equivalent on a closed manifold. 
For example, on tori, there is no metric $g$ which is of $R(g) \ge \kappa > 0$ in the Gromov's sense (or equivalently in the sense of {\cite[Definition 1.2]{burkhardt2019pointwise}}) from the resolution of Geroch's conjecture \cite{gromov1980spin, schoen1979existence, schoen1979structure}.
In contrast, a metric $g$ that is of $R(g) \ge \kappa > 0$ in the sense of the definition \ref{defi-distr} might exist on a torus.
At least on a manifold whose Yamabe invariant is nonpositive, the question of how different these definitions are is related to Schoen's conjecture (cf. \cite{cecchini2024positive, jiang2023weak, lee2025continuous}).
As a corollary of Corollary \ref{coro-weighted-scalar}, we can obtain the following. This is the same as Corollary \ref{coro-distr-0} in Section \ref{sec1}.
\begin{coro}[$=$ Corollary \ref{coro-distr-0}]
\label{coro-distr}
  Let $p > n^{2}/2$ and $\kappa$ a constant.
  Suppose that $M$ is a closed manifold of dimension $n \ge 2,$ $g$ is a $C^{2}$ Riemannian metric on $M$ and $(g_{i})$ is a sequence of $C^{2}$ metric on $M.$
  Assume the following:
  \begin{itemize}
    \item[(1)] a sequence $(\phi_{i})$ of nonnegative smooth functions on $M$ satisfying:
    for any positive constant $a > 0$ there is a positive constant $\Lambda > 0$ such that $\log (\phi_{i} + a)$ is $\Lambda$-Lipschitz on $M$ for all $i,$
    \item[(2)] $(\phi_{i})$ converges to some nonnegative continuous function $\phi$ in the uniformly $C^{0}$-sense on $M,$
    \item[(3)] $R(g_{i}) \ge 0$ on $M$ for each $i,$
    \item[(4)] $\int_{M} R(g_{i}) \phi_{i}\, d\mathrm{vol}_{g_{i}} \ge \kappa \int_{M} \phi_{i}\, d\mathrm{vol}_{g_{i}},$
    \item[(5)] $g_{i}$ converges to $g$ in the $W^{1, p}$-sense.
  \end{itemize}
  Then
  \[
  \int_{M} R(g) \phi\, d\mathrm{vol}_{g} \ge \kappa \int_{M} \phi\, d\mathrm{vol}_{g}.
             \]
\end{coro}
\begin{proof}
From $(3)$ and $(4),$ for any (small) positive constant $a > 0,$ 
\[
\begin{split}
  \int_{M} R(g_{i}) (\phi_{i}+ a)\, d\mathrm{vol}_{g_{i}} &\ge \kappa \int_{M} \phi_{i}\, d\mathrm{vol}_{g_{i}} \\
  &= \kappa \int_{M} \phi\, d\mathrm{vol}_{g} + \kappa \left( \int_{M} \phi_{i}\, d\mathrm{vol}_{g_{i}} - \int_{M} \phi\, d\mathrm{vol}_{g} \right).
\end{split}
\]
Then, applying Main Theorem \ref{theo-2} to $f_{i} = -\log (\phi_{i} + a)$ and $f = -\log (\phi + a),$ we obtain that
\[
  \int_{M} R(g) (\phi+ a)\, d\mathrm{vol}_{g_{i}} \ge \kappa \int_{M} \phi\, d\mathrm{vol}_{g} + \kappa \delta
  \]
for all $0 < \delta << 1.$
Here, $a, \delta > 0$ can be taken arbitrarily small, and we obtain the desired inequality:
\[
\int_{M} R(g) \phi\, d\mathrm{vol}_{g_{i}} \ge \kappa \int_{M} \phi\, d\mathrm{vol}_{g}.
\]
\end{proof}
\begin{ques}
Can we replace the condition $(3)$ and $(4)$ with ``$R(g_{i}) \ge \kappa$ in the distributional sense for some nonnegative constant $\kappa \ge 0$''?
\end{ques}

\noindent
If we make the regularity of convergence much stronger $W^{1, p}~(p > n^{2}/2)$ in {\cite[Theorem 3.2 (1)]{jiang2023weak}}, then it can be proven that the above question is positively true.

\section{Examples}
\label{section-examples}
In this section, we construct some examples of Riemannian metrics $(g_{i})$ on a smooth manifold $M$ such that 
$g_{i}$ converges to a metric $g$ in some sense and
\[
\int_{M} R(g_{i})\, d\mathrm{vol}_{g_{i}} \ge \kappa ~\mathrm{for~some~constant}~\kappa \in \mathbb{R},~\mathrm{but}~
\int_{M} R(g)\, d\mathrm{vol}_{g} < \kappa.
\]
\begin{itemize}
    \item In Example \ref{exam-integral}, $M = \mathbb{R}^{n} \setminus \{ o \}~(n \ge 3)$ and $g_{i} \rightarrow g_{Eucl}$ in the locally uniformly smooth sense in $\mathbb{R}^{n} \setminus \{ o \}$, but not in the $C^{0}$-sense. Note that the limiting metric $g_{Eucl}$ in $\mathbb{R}^{n} \setminus \{ o \}$ is incomplete.
    \item In Example \ref{exam-C10}, $M = \mathbb{R}^{n}~(n \ge 3)$ and $g_{i} \rightarrow g_{Eucl}$ in the uniformly $C^{0}$-sense, but not in the $W^{1,2}$-sense.
    \item In Example \ref{exam-closed}, $(M, g)$ is an arbitrary closed Riemannian $n$-manifold with $n \ge 3$ and $g_{i} \rightarrow g$ in the $C^{0}$-sense, but not in the $W^{1,2}$-sense.
    \item In Example \ref{exam-C21}, $M = \mathbb{R}^{n}~(n \ge 2)$ and $g_{i} \rightarrow g_{Eucl}$ in the uniformly $C^{1}$-sense, but not in the $C^{2}$-sense. 
\end{itemize}

If $M$ is a compact smooth manifold and $C^{2}$-Riemannian metrics $\{ g_{i} \}$ converges to a $C^{2}$-Riemannian metric $g$ on $M$ in the $C^{2}$ sense as $i \rightarrow \infty$,
then $\max_{M} R(g_{i}) \rightarrow \max_{M} R(g).$ 
So, by Lebesgue's dominated convergence theorem, we have
\[
\int_{M} R(g_{i})\, d\mathrm{vol}_{g_{i}} \rightarrow \int_{M} R(g)\, d\mathrm{vol}_{g}~~~~\mathrm{as}~~i \rightarrow \infty.
\]
However, if $M$ is non-compact, it is not known whether or not
there is a Lebesgue integrable function $f : M \rightarrow \mathbb{R}$ such that $|R(g_{i})| \le f$ a.e. on $M$ for all $i.$
Hence, in this situation, Lebesgue's dominated convergence theorem cannot be applied in general.
Indeed, the following example implies that Fact \ref{theo-3} does not hold in general if $M$ is non-compact.

\begin{exam}[$C^{\infty}$ locally uniformly convergence and incomplete limiting metric]
\label{exam-integral}
The limiting metric of the first example constructed below is incomplete on $\mathbb{R}^{n}~(n \ge 3)$.
Consider the smooth positive function $u_{i} : \mathbb{R}^{n} \rightarrow \mathbb{R}$ defined as
\[
u_{i} = 
\phi \left( i^{-1 + l} e^{-i r^{2}} \right)+ 1
\]
and
$\left( \mathbb{R}^{n},~g_{i} := u_{i}^{\frac{4}{n-2}} \cdot g_{Eucl} \right)~(n \ge 3).$
Here $\phi : \mathbb{R}^{n} \rightarrow [0,1]$ is a smooth cut-off function
such that $\phi \equiv 1$ on the closed ball $\overline{B_{r_{0}}} := \{ x \in \mathbb{R}^{n}|~r(x) \le r_{0} \}$ and
$\phi \equiv 0$ outside of the $\varepsilon/2$-neighbourhood $\left( \overline{B_{r_{0}}} \right)_{\varepsilon/2}$ of $\overline{B_{r_{0}}}$
where $r_{0} > 0$ is an arbitrarily fixed positive constant.
Here, $g_{Eucl}$ denotes the Euclidean metric on $\mathbb{R}^{n},~r : \mathbb{R}^{n} \rightarrow \mathbb{R}_{\ge 0}$ is the Euclidean distance function from the origin $o \in \mathbb{R}^{n}$, and
\[
l := \frac{n+2}{4}~\mathrm{when}
\begin{cases}
n = 2m + 1~(m \ge 1), \\
n = 2m~(m \ge 2).
\end{cases}
\]
Then for each $i,$ $(\mathbb{R}^{n}, g_{i})$ is a non-compact smooth Riemannian manifold with
\begin{equation}\label{eq-scal-conformal}
  \begin{split}
R(g_{i}) &= u_{i}^{- \frac{n + 2}{n - 2}} \left( - 4 \frac{n-1}{n-2} \Delta_{g_{Eucl}} u_{i} + R(g_{Eucl}) u_{i} \right) \\
&= u_{i}^{- \frac{n + 2}{n - 2}} \left( - 4 \frac{n-1}{n-2} \Delta_{g_{Eucl}} u_{i} \right).
\end{split}
\end{equation}
Moreover, $g_{i}$ converges to $g_{Ecul}$ in the locally $C^{\infty}$-sense in $\mathbb{R}^{n} \setminus \{ o \}$ but not in the $C^{0}$-sense on $\mathbb{R}^{n}.$
Note that $(\mathbb{R}^{n} \setminus \{ o \}, g_{Eucl})$ is incomplete.
On $\overline{B_{r_{0}}},$
\[
\begin{split}
|\nabla u_{i}|^{2} = \sum^{n}_{j=1} \left| \frac{\partial}{\partial x^{j}} i^{-1 + l} e^{-i r^{2}} \right|^{2} 
&= \sum^{n}_{j=1} \left| \frac{\partial r}{\partial x^{j}} \frac{\partial}{\partial r} i^{-1+l} e^{-i r^{2}} \right|^{2} \\
&= \sum^{n}_{j=1} \left| \frac{x^{j}}{r} (-2) i^{l} r e^{-i r^{2}} \right|^{2} \\
&= 4 i^{2l} r^{2} e^{-2 i r^{2}}.    
\end{split}
\]
When $i \rightarrow \infty$,
\[
R(g_{i}) \rightarrow 
\begin{cases}
0 & \mathrm{on}~\overline{B_{r_{0}}} \setminus \{ o \}, \\
\infty & \mathrm{at}~o.
\end{cases}
\]
Indeed, we can observe such a behavior from the form of the scalar curvature on $\overline{B_{r_{0}}}$ as follows.
\begin{equation}
\label{eq-scalar-curvature}
R(g_{i}) = 
-\frac{4(n-1)}{n-2} u_{i}^{-\frac{n+2}{n-2}} \left( -2 i^{l} n + 4 i^{l+1} r^{2} \right) e^{-i r^{2}}
\end{equation}
From (\ref{eq-scal-conformal}) and the divergence formula, we have
\begin{equation}
\label{eq-cal}
\begin{split}
\int_{\mathbb{R}^{n}} R(g_{i})\, d\mathrm{vol}_{g_{i}}
&= -4 \frac{n-1}{n-2} \int_{\mathbb{R}^{n}} u_{i}^{- \frac{n+2}{n-2}} \Delta_{g_{Eucl}} u_{i} \left( u_{i}^{\frac{2n}{n-2}}\, d\mathrm{vol}_{g_{Eucl}} \right) \\
&= -4 \frac{n-1}{n-2} \int_{\mathbb{R}^{n}} u_{i} \Delta_{g_{Eucl}} u_{i}\, d\mathrm{vol}_{g_{Eucl}} \\
&= 4 \frac{n-1}{n-2} \int_{\left( \overline{B_{r_{0}}} \right)_{\varepsilon}} |\nabla u_{i}|^{2} d\mathrm{vol}_{g_{Eucl}} \\
&\ge 16 \frac{n-1}{n-2} \int_{\overline{B_{r_{0}}}} i^{2l} r^{2} e^{-2 i r^{2}} d\mathrm{vol}_{g_{Eucl}} \\
&= 16 \frac{n-1}{n-2} \mathrm{Vol}(S^{n-1}) \int^{r_{0}}_{0} i^{2l} r^{2} e^{-2i r^{2}} r^{n-1}\, dr,
\end{split}
\end{equation}
where $\mathrm{Vol}(S^{n-1})$ denotes the volume of $(n-1)$-sphere with respect to the standard metric.
Here, 
\[
\begin{split}
\int^{r_{0}}_{0} r^{n+1} e^{-2i r^{2}} dr
&= \left[ -\frac{1}{2 i} r^{n} e^{-2 i r^{2}} \right]^{r_{0}}_{0}
+ \frac{n}{2i} \int^{r_{0}}_{0} r^{n-1} e^{-2i r^{2}} dr \\
&= -\frac{1}{2i} r_{0}^{n} e^{-2i r_{0}^{2}} + \frac{n}{2i} \int^{r_{0}}_{0} r^{n-1} e^{-2i r^{2}} dr
\end{split}
\]
Set the left hand side of this equation as
$I_{n+1} := \int^{r_{0}}_{0} r^{n+1} e^{ -2i r^{2}} dr.$
Then we have
\[
\begin{cases}
  I_{n+1} = 
-\frac{e^{-2i r_{0}^{2}}}{2i} \sum^{m}_{k = 0} \prod^{k}_{s = 0} \left( \frac{n - 2s}{2i} \right) + \prod^{m}_{s=0} \left( \frac{2s+1}{2i} \right)\, I_{0}  
&\mathrm{if}~n = 2m + 1~(m \ge 1), \\
I_{n+1} = 
-\frac{e^{-2i r_{0}^{2}}}{2i} \sum^{m - 1}_{k = 0} \prod^{k}_{s = 0} \left( \frac{n - 2s}{2i} \right) + \prod^{m}_{s=1} \left( \frac{2s}{2i} \right)\,  I_{1} 
&\mathrm{if}~n = 2m~(m \ge 2).
\end{cases}
\]
Moreover,
\[
\begin{split}
I_{0} = \int^{r_{0}}_{0} e^{-2i r^{2}}\, dr
&\ge \left( \int^{r_{0}}_{0} \int^{\frac{\pi}{2}}_{0} e^{-2i r^{2}} r dr d\theta \right)^{1/2} \\
&= \sqrt{\frac{\pi}{2} \left( \frac{1}{2i} - \frac{e^{-2ir_{0}^{2}}}{2i} \right)},
\end{split}
\]
and
\[
I_{1} = \int^{r_{0}}_{0} r e^{-2i r^{2}}\, dr
= \left[ -\frac{1}{2i} e^{-2ir^{2}} \right]^{r_{0}}_{0} = \frac{1}{2i} - \frac{1}{2i} e^{-2ir^{2}_{0}}.
\]
Combining these, as $i \rightarrow \infty,$ we can see that the rightmost term of (\ref{eq-cal}) converges to 
\[
\begin{cases}
16 \frac{n-1}{n-2} \mathrm{Vol}(S^{n-1}) \frac{\sqrt{\pi}}{2} \prod_{s= 0}^{m} \left( \frac{2s+1}{2} \right)\, (> 0)& \mathrm{if}~n = 2m +1~(m \ge 1), \\
8 \, \frac{n-1}{n-2} \mathrm{Vol}(S^{n-1}) \prod^{m}_{s=1} s\, (> 0) & \mathrm{if}~n = 2m~(m \ge 2).
\end{cases}
\]
Hence, for all sufficiently large $i,$
\[
\begin{split}
\int_{\mathbb{R}^{n}} &R(g_{i})\, d\mathrm{vol}_{g_{i}} \\
&\ge 
\begin{cases}
8 \frac{n-1}{n-2} \mathrm{Vol}(S^{n-1}) \frac{\sqrt{\pi}}{2} \prod_{s= 0}^{m} \left( \frac{2s+1}{2} \right)\, (> 0)& \mathrm{if}~n = 2m +1~(m \ge 1), \\
4 \, \frac{n-1}{n-2} \mathrm{Vol}(S^{n-1}) \prod^{m}_{s=1} s\, (> 0) & \mathrm{if}~n = 2m~(m \ge 2).
\end{cases}
\end{split}
\]

\smallskip
\noindent
Note that $R(g_{i})$ cannot be nonnegative on $\mathbb{R}^{n}$ by the positive mass theorem or the resolution of
Geroch's conjecture on tori \cite{gromov1980spin, schoen1979existence, schoen1979structure}). Indeed, from (\ref{eq-scalar-curvature}),
\[
R(g_{i})(o) = 8 \frac{n(n-1)}{n-2} i^{l} \left( i^{-1+l} + 1 \right)^{-\frac{n+2}{n-2}} ~~> 0,
\]
and
\[
R(g_{i})(x) = -4 \frac{n-1}{n-2} i^{l} \left( i^{-1+l} e^{-\frac{i r^{2}_{0}}{4}} + 1 \right)^{-\frac{n+2}{n-2}} \left( -2 n + i r^{2}_{0} \right)\, e^{-\frac{i r^{2}_{0}}{4}}~~< 0
\]
for any point $x \in \left\{ x \in \mathbb{R}^{n} |~r(x) = \frac{r_{0}}{2} \right\}$ and sufficiently large $i.$
\end{exam}

Next, we will construct a counterexample to Fact \ref{theo-3} in a certain sense, in which each $g_{i}$ is complete.
\begin{exam}[Not $C^{1}$ but $C^{0}$]
\label{exam-C10}
Consider $\left( \mathbb{R}^{n},~g_{i} := u_{i}^{\frac{4}{n-2}} \cdot g_{Eucl} \right)~(n \ge 3,~i = 2,3, \cdots ).$
Here the smooth positive function $u_{i} : \mathbb{R}^{n} \rightarrow \mathbb{R}$ has been defined as
\[
u_{i} = 
\phi \left( i^{-1} \sin(i r^{2}) \right)+ 1.
\]
Here, $\phi : \mathbb{R}^{n} \rightarrow [0,1]$ is a smooth cut-off function
such that $\phi \equiv 1$ on $\overline{B_{r_{0}}} := \{ x \in \mathbb{R}^{n}|~r(x) \le r_{0} \}$ and
$\phi \equiv 0$ outside of the $\varepsilon/2$-neighborhood $\overline{B_{r_{0}}}.$
where $r_{0} > 0$ is an arbitrarily fixed positive constant.
Here, $r : \mathbb{R}^{n} \rightarrow \mathbb{R}_{\ge 0}$ is the Euclidean distance function from the origin $o \in \mathbb{R}^{n}.$
Then, for each $i,$ $(\mathbb{R}^{n}, g_{i})$ is a non-compact smooth Riemannian manifold with
\begin{equation}\label{eq-scal-conformal-2}
  \begin{split}
R(g_{i}) &= u_{i}^{- \frac{n + 2}{n - 2}} \left( - 4 \frac{n-1}{n-2} \Delta_{g_{Eucl}} u_{i} + R(g_{Eucl}) u_{i} \right) \\
&= u_{i}^{- \frac{n + 2}{n - 2}} \left( - 4 \frac{n-1}{n-2} \Delta_{g_{Eucl}} u_{i} \right),
\end{split}
\end{equation}
and
$g_{i} \rightarrow g_{Ecul}$ on $\mathbb{R}^{n}$ in the uniformly $C^{0}$ but not in the $C^{1}$-sense.
On $\overline{B_{r_{0}}},$
\[
\begin{split}
|\nabla u_{i}|^{2} = \sum^{n}_{j=1} \left| \frac{\partial}{\partial x^{j}} i^{-1} \sin(i r^{2}) \right|^{2} 
&= \sum^{n}_{j=1} \left| \frac{\partial r}{\partial x^{j}} \frac{\partial}{\partial r} i^{-1} \sin(i r^{2}) \right|^{2} \\
&= \sum^{n}_{j=1} \left| \frac{x^{j}}{r} (2 r) \cos(i r^{2}) \right|^{2} \\
&= 4 r^{2} \cos^{2}(i r^{2}) \\
&= 2 r^{2} (1+ \cos(2 i r^{2})).    
\end{split}
\]
Note that $R(g_{i})$ is not nonnegative on $\mathbb{R}^{n}.$ Indeed,
for sufficiently large $i$ and $k \in \mathbb{Z}$ such that 
$\overline{B_{r_{0}}} \cap \left\{ x \in \mathbb{R}^{n} |~r(x) = \sqrt{\frac{(2k-1)}{2 i}} \right\} \neq \emptyset,$
we can take a point $x_{i} \in \overline{B_{r_{0}}} \cap \left\{ x \in \mathbb{R}^{n} |~r(x) = \sqrt{\frac{(2k-1)}{2 i}} \right\}.$
Then
\[
R(g_{i}) (x_{i}) \rightarrow
\begin{cases}
\frac{8(n-1)(2k-1) \pi}{n-2} & \mathrm{if}~k~\mathrm{is~odd}, \\
- \frac{8(n-1)(2k-1) \pi}{n-2} & \mathrm{if}~k~\mathrm{is~even}.
\end{cases}
\]
This is checked as follows.
For sufficiently large $i,$ such a point $x_{i}$ is contained in $\overline{B_{r_{0}}}.$
Hence, from the above formula and the choice of the point $x_{i},$ we have
\[
\begin{split}
R(g_{i}) (x_{i}) &=  u_{i}^{- \frac{n + 2}{n - 2}} \left( - 4 \frac{n-1}{n-2} \Delta_{g_{Eucl}} u_{i} \right) \\
&= - 4 \frac{n-1}{n-2} \left( i^{-1} + 1 \right)^{- \frac{n + 2}{n - 2}} \left( 2n \cos(i r(x_{i})^{2}) -4 i r^{2} \sin(i r(x_{i})^{2}) \right) \\
&= (-1)^{k+1} \frac{8(n-1)(2k-1) \pi}{n-2} \left( i^{-1} + 1 \right)^{- \frac{n + 2}{n - 2}}.
\end{split}
\]
Since $\left( i^{-1} + 1 \right)^{- \frac{n + 2}{n - 2}} \rightarrow 1~(i \rightarrow \infty),$
we can observe the desired behavior of the scalar curvature as above.
Moreover, from (\ref{eq-scal-conformal-2}) and the divergence formula, we have
\[
\begin{split}
\int_{\mathbb{R}^{n}} R(g_{i})\, d\mathrm{vol}_{g_{i}}
&= -4 \frac{n-1}{n-2} \int_{\mathbb{R}^{n}} u_{i}^{- \frac{n+2}{n-2}} \Delta_{g_{Eucl}} u_{i} \left( u_{i}^{\frac{2n}{n-2}}\, d\mathrm{vol}_{g_{Eucl}} \right) \\
&= -4 \frac{n-1}{n-2} \int_{\mathbb{R}^{n}} u_{i} \Delta_{g_{Eucl}} u_{i}\, d\mathrm{vol}_{g_{Eucl}} \\
&= 4 \frac{n-1}{n-2} \int_{\left( \overline{B_{r_{0}}} \right)_{\varepsilon}} |\nabla u_{i}|^{2} d\mathrm{vol}_{g_{Eucl}} \\
&\ge 8 \frac{n-1}{n-2} \int_{\overline{B_{r_{0}}}} r^{2} (1+ \cos(2 i r^{2})) d\mathrm{vol}_{g_{Eucl}} \\
&= 8 \frac{n-1}{n-2} \mathrm{Vol}(S^{n-1}) \int_{0}^{r_{0}} r^{2} (1+ \cos(2 i r^{2})) r^{n-1}\, dr.
\end{split}
\]
Here, 
\[
\begin{split}
\int_{0}^{r_{0}} &r^{2} (1+ \cos(2 i r^{2})) r^{n-1} dr
= \left[ \frac{1}{n+2} r^{n+2} \right]_{0}^{r_{0}} + \int_{0}^{r_{0}} r^{n+1} \cos(2 i r^{2})\, dr \\
&= \frac{1}{n+2} r^{n+2}_{0} + \left[ \frac{r^{n} i^{-1}}{4} \sin(2 i r^{2}) \right]^{r_{0}}_{0} + \frac{n i^{-2}}{16} \left[ r^{n-2} \cos(2 i r^{2}) \right]^{r_{0}}_{0} \\
&~~~~~~~~~~~~~~~~~~~~- \frac{n(n-2) i^{-2}}{16} \int^{r_{0}}_{0} r^{n-3} \cos(2 i r^{2})\, dr \\
&\ge  \frac{1}{n+2} r^{n+2}_{0} + \left[ \frac{r^{n} i^{-1}}{4} \sin(2 i r^{2}) \right]^{r_{0}}_{0} + \frac{n i^{-2}}{16} \left[ r^{n-2} \cos(2 i r^{2}) \right]^{r_{0}}_{0} \\
&~~~~~~~~~~~~~~~~~~~~- \frac{n(n-2) i^{-1}}{16} \int^{r_{0}}_{0} r^{n-3}\, dr \\
&=  \frac{1}{n+2} r^{n+2}_{0} + \frac{r_{0}^{n} i^{-1}}{4} \sin(2 i r_{0}^{2}) + \frac{n i^{-2}}{16}\, r_{0}^{n-2} \cos(2 i r_{0}^{2}) - \frac{n i^{-2}}{16} r_{0}^{n-2}.
\end{split}
\]
Since 
\[
\frac{r_{0}^{n} i^{-1}}{4} \sin(2 i r_{0}^{2}) + \frac{n i^{-2}}{16}\, r_{0}^{n-2} \cos(2 i r_{0}^{2}) - \frac{n i^{-2}}{16} r_{0}^{n-2} \rightarrow 0~~\mathrm{as}~i \rightarrow \infty,
\]
there is a sufficiently large $i_{0} = i_{0}(n, r_{0})$ such that for all $i \ge i_{0},$ 
\[
\frac{r_{0}^{n} i^{-1}}{4} \sin(2 i r_{0}^{2}) + \frac{n i^{-2}}{16}\, r_{0}^{n-2} \cos(2 i r_{0}^{2}) - \frac{n i^{-2}}{16} r_{0}^{n-2} 
> - \frac{1}{2(n+2)} r^{n+2}_{0}.
\]
Hence, for all $i \ge i_{0},$
\[
\int_{\mathbb{R}^{n}} R(g_{i})\, d\mathrm{vol}_{g_{i}} > 4 \frac{n-1}{(n+2)(n-2)} \mathrm{Vol}(S^{n-1})\, r_{0}^{n+2} > 0.
\]
\end{exam}
\begin{ques}
    Does the same statement as Fact \ref{theo-1} hold on open manifolds?
    Related to this question and the above example \ref{exam-C10}, are there examples that converge with respect to the $C^{0} \cap W^{1,2}$-topology but do not preserve the lower bound of the total scalar curvature?
\end{ques}

\bigskip
From the Morrey embedding, we have
\[
 W^{1, p} \hookrightarrow C^{0, 1 - \frac{n}{p}} \hookrightarrow C^{0}~~~\mathrm{if}~p > n.
\]
Therefore the same statement of Fact \ref{theo-3} still holds even though one replace $C^{0} \cap W^{1,2}$ with $W^{1, p}~(p > n)$.
On the other hand, in Fact \ref{theo-3}, if we weaken the assumption from $C^{0} \cap W^{1, 2}$ to $C^{0},$ then the same statement does not hold in general.
Indeed, using the same local construction as in the previous example in dimension $\ge 3,$ we can also construct a counterexample on a closed manifold to Fact \ref{theo-3} as follows.
Note that each metric $g_{i}$ in each example below has sign-changing scalar curvature, i.e., for each $i,$ there are some points $x_{i}, y_{i} \in M$ s.t. $R(g_{i})(x_{i}) < 0 < R(g_{i})(y_{i}).$

\begin{exam}[On every closed manifold]
\label{exam-closed}
Consider $\left( M^{n},~g_{i} := u_{i}^{\frac{4}{n-2}} \cdot g_{0} \right)$ $(n \ge 3,~i = 2,3, \cdots ),$
where $M^{n}$ is a closed $n$-manifold and $g_{0}$ is a Riemannian metric on $M.$
Here the smooth positive function $u_{i} : M \rightarrow \mathbb{R}$ has been defined as
\[
u_{i} = 
\phi \left( i^{-1} \sin(i h^{2}) \right)+ 1.
\]
Here, $\phi : M \rightarrow [0,1]$ is a smooth cut-off function
such that $\phi \equiv 1$ on $\overline{B_{r_{0}}}(p) := \{ x \in M |~d_{g_{0}}(p, x) \le r_{0} \}$ and
$\phi \equiv 0$ outside of the $\varepsilon/2$-neighbourhood $\overline{B_{r_{0}}}$ for some point $p \in M$
where $0 < r_{0} < \mathrm{inj}(M, g_{0})$ is a sufficiently small positive constant.
Here, $h :=d_{g_{0}}(\cdot, p) : M \rightarrow \mathbb{R}_{\ge 0}$ is the distance function of $g_{0}$ from the point $p$
and $\mathrm{inj}(M, g_{0})$ is the injectivity radius of $(M, g_{0}).$
Then, for each $i,$ $(M , g_{i})$ is a smooth Riemannian manifold with
\[
R(g_{i}) = u_{i}^{- \frac{n + 2}{n - 2}} \left( - 4 \frac{n-1}{n-2} \Delta_{g_{0}} u_{i} + R(g_{0}) u_{i} \right)
\]
and $g_{i}$ converges to $g_{0}$ on $M$ in the $C^{0}$ but not in the $C^{1}$-sense.
In the same calculation as in the previous example, we have
\[
\begin{split}
\int_{M} &R(g_{i})\, d\mathrm{vol}_{g_{i}} \\
&= -4 \frac{n-1}{n-2} \int_{M} u_{i}^{- \frac{n+2}{n-2}} \left( \Delta_{g_{0}} u_{i} + R_{g_{0}} u_{i} \right)\, u_{i}^{\frac{2n}{n-2}}\, d\mathrm{vol}_{g_{0}} \\
&= -4 \frac{n-1}{n-2} \int_{M} \left( u_{i} \Delta_{g_{0}} u_{i} + R_{g_{0}} u_{i}^{2} \right)\, d\mathrm{vol}_{g_{0}} \\
&= 4 \frac{n-1}{n-2} \int_{\left( \overline{B_{r_{0}}} \right)_{\varepsilon}} |\nabla u_{i}|_{g_{0}}^{2} d\mathrm{vol}_{g_{0}} 
+ \int_{M} R(g_{0}) u_{i}^{2}\, d\mathrm{vol}_{g_{0}} \\
&\ge 8 \frac{n-1}{n-2} \int_{\overline{B_{r_{0}}}} h^{2} (1+ \cos(2 i h^{2})) d\mathrm{vol}_{g_{0}} + \int_{M} R(g_{0}) u_{i}^{2}\, d\mathrm{vol}_{g_{0}} \\
&\ge 8 \frac{n-1}{n-2} \widetilde{C} \int_{0}^{r_{0}} h^{2} (1+ \cos(2 i h^{2})) h^{n-1}\, dh
+ \int_{M} R(g_{0}) u_{i}^{2}\, d\mathrm{vol}_{g_{0}}.
\end{split}
\]
Here, the constant $\widetilde{C}$ depends only on $n$ and $g_{0}.$
Thus, from the observation as in the previous example, there is $i_{0} \in \mathbb{N}$ and a positive constant $C = C(n, g_{0}, r_{0}) > 0$ such that
for all $i \ge i_{0},$
\[
8 \frac{n-1}{n-2} \widetilde{C} \int_{0}^{r_{0}} h^{2} (1+ \cos(2 i h^{2})) h^{n-1}\, dh \ge C > 0.
\]
Therefore, for all $i \ge i_{0},$
\[
\int_{M} R(g_{i})\, d\mathrm{vol}_{g_{i}} \ge C + \int_{M} R(g_{0}) u_{i}^{2}\, d\mathrm{vol}_{g_{0}}.
\]
Moreover, by the definition of $u_{i},$
\[
\left| \int_{M} R(g_{0}) u^{2}_{i}\, d\mathrm{vol}_{g_{0}} - \int_{M} R(g_{0})\, d\mathrm{vol}_{g_{0}} \right|
\le \int_{M} \left| R(g_{0}) \right| \left( 2i^{-1} + i^{-2} \right)\, d\mathrm{vol}_{g_{0}}.
\]
Hence, there is a sufficiently large $i_{1}$ such that for all $i \ge i_{1},$
\[
\left| \int_{M} R(g_{0}) u^{2}_{i}\, d\mathrm{vol}_{g_{0}} - \int_{M} R(g_{0})\, d\mathrm{vol}_{g_{0}} \right| \le \frac{C}{2}.
\]
Thus, for all $i \ge \max \{i_{0}, i_{1} \},$ we have
\[
\int_{M} R(g_{i})\, d\mathrm{vol}_{g_{i}} \ge C + \int_{M} R(g_{0}) u_{i}^{2}\, d\mathrm{vol}_{g_{0}} > \int_{M} R(g_{0})\, d\mathrm{vol}_{g_{0}}.
\]
\end{exam}
Here, we have a question about the regularity of convergence in the assumption of Main Theorem \ref{theo-2}.
\begin{ques}
Fix $a,b,c \in \mathbb{R}$.
Are there any $C^{2}$-metrics $(g_{i})$ and $\Lambda$-Lipschitz ($\Lambda > 0$) functions $(f_{i})$ on a closed $n$-manifold $M^{n}~(n \ge 3)$ satisfying the followings ?
\begin{itemize}
\item $g_{i}$ converges to a $C^{2}$-metric $g$ in the $W^{1, \frac{n^{2}}{2}}$-sense,
\item $f_{i}$ converges to a $\Lambda$-Lipschitz function $f$ in the uniformly $C^{0}$-sense,
\item there is a constant $\kappa$ such that $\int_{M} R_{f_{i}}^{a,b,c}(g_{i})\, e^{-f_{i}}d\mathrm{vol}_{{g}_{i}} \ge \kappa > \int_{M} R_{f}^{a.b.c}(g)\, e^{-f}d\mathrm{vol}_{g}.$ 
\end{itemize}
Or, additionally,
\begin{itemize}
\item there is a point $p_{i} \in M$ for each $i$ such that $R(g_{i})(p_{i}) \rightarrow -\infty$ as $i \rightarrow \infty.$
\end{itemize}
\end{ques}

In the following Example \ref{exam-C21}, we give another counterexample which is similar to the one in Example \ref{exam-C10} ($n \ge 3$).
However, in the following example of dimension $\ge 3,$ the support of $u_{i} - 1$ ($\subset (\mathbb{R}^{n}, d_{g_{Eucl}})$) with the origin $o \in \mathbb{R}^{n}$ converges to $(\mathbb{R}^{n}, g_{Eucl}, o)$ in the pointed Gromov-Hausdorff sense as $i \rightarrow \infty.$
Note that in Example \ref{exam-C10}, the support of $u_{i} -1$ is contained in a fixed compact subset.
Hence, unfortunately, it is not possible to localize this construction directly and construct such a counterexample on a closed manifold as in Example \ref{exam-closed}.
On the other hand, in the two-dimensional example of Example \ref{exam-C21}, the support of $e^{u_{i}} - 1$ is not compact for each $i$ (see the last half of Example \ref{exam-C21}).
\begin{exam}[Not $C^{2}$ but $C^{1}$]
\label{exam-C21}
We will construct an example similar to the one in Example \ref{exam-C10}. However, in this example, the topology of the convergence of the metrics is different.

Consider $\left( \mathbb{R}^{n},~g_{i} := u_{i}^{\frac{4}{n-2}} \cdot g_{Eucl} \right)~(n \ge 3,~i = 2,3, \cdots ).$
Here the smooth positive function $u_{i} : \mathbb{R}^{n} \rightarrow \mathbb{R}$ has been defined as
\[
u_{i} = 
\phi_{i} \left( i^{-2} \sin(i r^{2}) \right)+ 1.
\]
Here, $\phi_{i} : \mathbb{R}^{n} \rightarrow [0,1]$ is a smooth cut-off function
such that $\phi_{i} \equiv 1$ on $\overline{B_{r_{i}}} := \{ x \in \mathbb{R}^{n}|~r(x) \le r_{i} \}$ and
$\phi_{i} \equiv 0$ outside of the $\varepsilon/2$-neighborhood $\overline{B_{r_{i}}}.$
where $r_{i} := i^{\frac{2}{n+2}}.$ Note that $r_{i} \rightarrow \infty$ as $i \rightarrow \infty.$
Here, $r : \mathbb{R}^{n} \rightarrow \mathbb{R}_{\ge 0}$ is the Euclidean distance function from the origin $o.$
Then, for each $i,$ $(\mathbb{R}^{n}, g_{i})$ is a non-compact smooth Riemannian manifold with
\[
\begin{split}
R(g_{i}) &= u_{i}^{- \frac{n + 2}{n - 2}} \left( - 4 \frac{n-1}{n-2} \Delta_{g_{Eucl}} u_{i} + R(g_{Eucl}) u_{i} \right) \\
&= u_{i}^{- \frac{n + 2}{n - 2}} \left( - 4 \frac{n-1}{n-2} \Delta_{g_{Eucl}} u_{i} \right).
\end{split}
\]
Moreover, $g_{i}$ converges to $g_{Ecul}$ on $\mathbb{R}^{n}$ in the uniformly $C^{1}$ but not in the $C^{2}$-sense.
On $\overline{B_{r_{i}}},$
\[
\begin{split}
|\nabla u_{i}|^{2} = \sum^{n}_{j=1} \left| \frac{\partial}{\partial x^{j}} i^{-2} \sin(i r^{2}) \right|^{2} 
&= \sum^{n}_{j=1} \left| \frac{\partial r}{\partial x^{j}} \frac{\partial}{\partial r} i^{-2} \sin(i r^{2}) \right|^{2} \\
&= \sum^{n}_{j=1} \left| \frac{x^{j}}{r} (2 i^{-1} r) \cos(i r^{2}) \right|^{2} \\
&= 4 i^{-2} r^{2} \cos^{2}(i r^{2}) \\
&= 2 i^{-2} r^{2} (1+ \cos(2 i r^{2})).    
\end{split}
\]
Note that $R(g_{i})$ is not nonnegative on $\mathbb{R}^{n}.$ Indeed,
when $i \rightarrow \infty,~R(x)$ oscillates for each $x \in \{ x \in \mathbb{R}^{n} |~r(x) \neq 0 \}.$ 
Indeed, for sufficiently large $i,$ such a point $x$ is contained in $\overline{B_{r_{i}}}.$
Hence, from the above formula,
\[
\begin{split}
R(g_{i}) (x) &=  u_{i}^{- \frac{n + 2}{n - 2}} \left( - 4 \frac{n-1}{n-2} \Delta_{g_{Eucl}} u_{i} \right) \\
&= - 4 \left( \frac{n-1}{n-2} \right) \frac{2n i^{-1} \cos(i r(x)^{2}) -4 r(x)^{2} \sin(i r(x)^{2})}{\left( i^{-2} \sin(i r(x)^{2}) + 1 \right)^{\frac{n + 2}{n - 2}}}.
\end{split}
\]
Since $\left( i^{-1} + 1 \right)^{- \frac{n + 2}{n - 2}} \rightarrow 1$
and
$2n i^{-1} \cos(i r(x)^{2}) \rightarrow 0$
as $i \rightarrow \infty,$
we can easily observe the desired behavior of the scalar curvature.
Moreover, by the divergence formula, 
\[
\begin{split}
\int_{\mathbb{R}^{n}} R(g_{i})\, d\mathrm{vol}_{g_{i}}
&= -4 \frac{n-1}{n-2} \int_{\mathbb{R}^{n}} u_{i}^{- \frac{n+2}{n-2}} \Delta_{g_{Eucl}} u_{i} \left( u_{i}^{\frac{2n}{n-2}}\, d\mathrm{vol}_{g_{Eucl}} \right) \\
&= -4 \frac{n-1}{n-2} \int_{\mathbb{R}^{n}} u_{i} \Delta_{g_{Eucl}} u_{i}\, d\mathrm{vol}_{g_{Eucl}} \\
&= 4 \frac{n-1}{n-2} \int_{\left( \overline{B_{r_{i}}} \right)_{\varepsilon}} |\nabla u_{i}|^{2} d\mathrm{vol}_{g_{Eucl}} \\
&\ge 8 \frac{n-1}{n-2} \int_{\overline{B_{r_{i}}}} r^{2} i^{-2} (1+ \cos(2 i r^{2})) d\mathrm{vol}_{g_{Eucl}} \\
&= 8 \frac{n-1}{n-2} \mathrm{Vol}(S^{n-1}) \int_{0}^{r_{i}} r^{2} i^{-2} (1+ \cos(2 i r^{2})) r^{n-1}\, dr.
\end{split}
\]
Here, 
\[
\begin{split}
\int_{0}^{r_{i}} &i^{-2} r^{2} (1+ \cos(2 i r^{2})) r^{n-1} dr
= \left[ \frac{i^{-2}}{n+2} r^{n+2} \right]_{0}^{r_{i}} + \int_{0}^{r_{i}} i^{-2} r^{n+1} \cos(2 i r^{2})\, dr \\
&= \frac{1}{n+2} + \left[ \frac{r^{n} i^{-3}}{4} \sin(2 i r^{2}) \right]^{r_{i}}_{0} + \frac{n i^{-4}}{16} \left[ r^{n-2} \cos(2 i r^{2}) \right]^{r_{i}}_{0} \\
&~~~~~~~~~~~~~~~~~~~~- \frac{n(n-2) i^{-4}}{16} \int^{r_{i}}_{0} r^{n-3} \cos(2 i r^{2})\, dr \\
&\ge  \frac{1}{n+2} + \left[ \frac{r^{n} i^{-3}}{4} \sin(2 i r^{2}) \right]^{r_{i}}_{0} + \frac{n i^{-4}}{16} \left[ r^{n-2} \cos(2 i r^{2}) \right]^{r_{i}}_{0} \\
&~~~~~~~~~~~~~~~~~~~~- \frac{n(n-2) i^{-4}}{16} \int^{r_{i}}_{0} r^{n-3}\, dr \\
&=  \frac{1}{n+2} + \frac{r_{i}^{n} i^{-3}}{4} \sin(2 i r_{i}^{2}) + \frac{n i^{-4}}{16}\, r_{i}^{n-2} \cos(2 i r_{i}^{2}) - \frac{n i^{-4}}{16} r_{i}^{n-2} \\
&\ge \frac{1}{n+2} + \frac{i^{\frac{2n}{n+2}} i^{-3}}{4} \sin(2 i^{1+ \frac{4}{n+2}}) + \frac{n i^{-4}}{16}\, i^{\frac{2(n-2)}{n+2}} \cos(2 i^{1+ \frac{4}{n+2}}) - \frac{n i^{-4}}{16} i^{\frac{2(n-2)}{n+2}}.
\end{split}
\]
Since 
\[
\frac{i^{\frac{2n}{n+2}} i^{-3}}{4} \sin(2 i^{1+ \frac{4}{n+2}}) + \frac{n i^{-4}}{16}\, i^{\frac{2(n-2)}{n+2}} \cos(2 i^{1+ \frac{4}{n+2}}) - \frac{n i^{-4}}{16} i^{\frac{2(n-2)}{n+2}} \rightarrow 0~~\mathrm{as}~i \rightarrow \infty,
\]
there is a sufficiently large $i_{0} = i_{0}(n)$ such that for all $i \ge i_{0},$
\[
\frac{i^{\frac{2n}{n+2}} i^{-3}}{4} \sin(2 i^{1+ \frac{4}{n+2}}) + \frac{n i^{-4}}{16}\, i^{\frac{2(n-2)}{n+2}} \cos(2 i^{1+ \frac{4}{n+2}}) - \frac{n i^{-4}}{16} i^{\frac{2(n-)}{n+2}}
> - \frac{1}{2(n+2)}.
\]
Hence for all $i \ge i_{0},$
\[
\int_{\mathbb{R}^{n}} R(g_{i})\, d\mathrm{vol}_{g_{i}} > 4 \frac{n-1}{(n+2)(n-2)} \mathrm{Vol}(S^{n-1}) > 0.
\]

\bigskip
Next, we will construct a two-dimensional example.
Consider the smooth function $u_{i}$ on $\mathbb{R}^{2}$ defined by
\[
u_{i} := e^{-ir^{2}} \sin \left( -\frac{i}{2} r^{2} \right)~~(i = 1,2, \cdots ),
\]
where $r(\cdot) := |o - \cdot|$ denotes the Euclidean distance function from the origin $o \in \mathbb{R}^{2}.$
Then $u_{i}$ uniformly converges to the constant function $0$ in the $C^{1}$ topology in $\mathbb{R}^{2}$, but $u_{i}$ does not converge to $0$ in the $C^{2}$ topology in $\mathbb{R}^{2}$.
Hence the sequence of complete metrics $(g_{i} := e^{u_{i}} g_{Eucl})$ on $\mathbb{R}^{2}$ uniformly converges to $g_{Eucl}$ in the $C^{1}$ sense on $\mathbb{R}^{2},$ but $g_{i}$ does not converge to $g_{Eucl}$ in the $C^{2}$ sense on $\mathbb{R}^{2}.$
Set $a := -i, b := \frac{1}{2} a = -\frac{i}{2}.$
Then we can check that
\[
\begin{split}
\Delta u_{i} &= (4a + 4a^{2} r^{2}) e^{ar^{2}} \sin (br^{2}) + (4b \cos r^{2} - 4b^{2} r^{2} \sin br^{2}) e^{ar^{2}} \\
&~~~~~~~~~~+ 8ab r^{2} e^{ar^{2}} \cos br^{2},
\end{split}
\]
and 
\[
\int_{\mathbb{R}^{2}} R(g_{i})\, d\mathrm{vol}_{g_{i}} = - \int_{\mathbb{R}^{2}} \Delta_{g_{Eucl}} u_{i}\, d\mathrm{vol}_{g_{Ecul}}
= -\int^{2\pi}_{0} \int_{0}^{\infty} r \Delta_{g_{Eucl}} u_{i}\, dr\, d\theta.
\]
Moreover,
\begin{itemize}
\item $I := \int^{\infty}_{0} r e^{ar^{2}} \cos br^{2}\, dr = -\frac{a}{2(a^{2} + b^{2})},$

\item $J := \int^{\infty}_{0} r e^{ar^{2}} \sin br^{2}\, dr = -\frac{b}{2(a^{2} + b^{2})},$

\item $K := \int^{\infty}_{0} r^{3} e^{ar^{2}} \cos br^{2}\, dr = -\frac{aI +bJ}{a^{2} + b^{2}},$
\item $L := \int^{\infty}_{0} r^{3} e^{ar^{2}} \sin br^{2}\, dr = -\frac{aJ - bI}{a^{2} + b^{2}}.$
\end{itemize}
Combining these, we obtain that
\[
\begin{split}
\int_{\mathbb{R}^{2}} R(g_{i})\, d\mathrm{vol}_{g_{i}}
&= -2\pi \left( 4b I + 4a J - 4(a^{2} - b^{2}) \frac{aJ - bI}{a^{2} + b^{2}} -8ab \frac{bJ -aI}{a^{2} + b^{2}} \right) \\
&= 2\pi \frac{8a^{3} b}{(a^{2} + b^{2})^{2}} \\
&= \frac{128 \pi}{25} > 0 = \int_{\mathbb{R}^{2}} R(g_{Eucl})\, d\mathrm{vol}_{g_{Ecul}}.
\end{split}
\]
Note that we have used $b = \frac{1}{2} a$ in the third equality.
\end{exam}

\begin{ques}[c.f. \ref{rema-counterexample}]
As we have seen in Example \ref{exam-closed}, in Fact \ref{theo-3}, we cannot weaken the assumptions that the manifold is closed and the convergence is in the sense of $C^{0} \cap W^{1, 2}$ to that the manifold is open and the convergence is in the sense of $C^{0}$ respectively.  
On the other hand, can we weaken the assumptions in Main Theorems \ref{theo-2} and \ref{theo-1} in any sense?
\end{ques}
\begin{rema}
In the above examples, we have constructed these counterexamples by deforming the Euclidean metric locally in a conformal direction.
Then, due to the factors from changes of the volume forms, the total scalar curvatures are uniformly bounded from below by a positive constant.
On the other hand, if we try to investigate similar examples for the weighted total scalar curvature $\int_{M} R^{a,b,c}_{f}(g)\, e^{-f} d\mathrm{vol}_{g}$ (i.e., counterexamples to Main Theorem \ref{theo-2}), we don't know whether or not we can use the same method in the above examples in this situation.
\end{rema}

\appendix
\section{Appendix}\label{section-appendix}
\begin{lemm}[Gronwall's inequality]
\label{lemm-gronwall}
    Let $u, \alpha, \beta : [0, \tau] \rightarrow \mathbb{R}$ be continuous functions. 
    Assume that 
    \begin{equation}\label{eq-gronwall-assumption}
    \frac{\mathrm{d}}{\mathrm{d}t}u(t) \ge \alpha(t) - \beta (t) u(t)
    \end{equation}
    for all $t \in (0, \tau]$.
    Then, 
    \[
    u(t) \ge \left( \exp \int^{t}_{0} \beta(s)\, ds \right)^{-1} \left( u(0) + \int^{t}_{0} \left( \exp \int^{s}_{0} \beta(r)\, dr \right) \alpha(s)\, ds \right)
    \]
    for all $t \in [0, \tau]$.
\end{lemm}
\begin{proof}
    Define a new function
    \[
    v(s) := \exp \int^{s}_{0} \beta(r)\, dr.
    \]
    Then, from the assumption (\ref{eq-gronwall-assumption}),
    \[
    \frac{\mathrm{d}}{\mathrm{d}s}(v(s) u(s)) \ge v(s) \alpha(s).
    \]
    By integration on $[0, t]$, we have
    \[
    v(t) u(t) - v(0) u(0) \ge \int^{t}_{0} v(s) \alpha(s)\, ds
    \]
    for all $t \in [0, \tau]$.
    Since $v(0) = 1$, we obtain the desired assertion by dividing both sides of this inequality.
\end{proof}
\bigskip
\begin{flushleft}
  \textbf{Acknowledgements} The author thanks Prof. Boris Botvinnik for asking the question about Main Theorem \ref{theo-2}.
The author also thanks Prof. Kazuo Akutagawa for giving him the opportunity to visit the University of Oregon from September 25 to October 10, 2022.

   \medskip\noindent
  \textbf{Author Contributions} SH has written the manuscript.

  \medskip\noindent
  \textbf{Funding} The author was supported by JSPS KAKENHI Grant Number 24KJ0153.

  \medskip\noindent
  \textbf{Data availability} Not applicable.
\end{flushleft}

\section*{Declarations}

\begin{flushleft}
  \textbf{Conflict of interest} The author declares that there is no conflict of interest.

  \medskip\noindent
  \textbf{Ethics approval and consent to participate} Not applicable.

  \medskip\noindent
  \textbf{Consent for publication} The author declares the consent for publication.
\end{flushleft}

\end{document}